\newcommand{\RR}{\mathbb{R}}
\newcommand{\CC}{\mathbb{C}}
\newcommand{\NN}{\mathbb{N}}	
\newcommand{\ZZ}{\mathbb{Z}}
\newcommand{\EE}{\, \mathbb{E} \,}
\newcommand{\PP}{\mathbb{P}}
\newcommand{\Tr}{{\mathop{\mathrm{Tr} \,}}}
\newcommand{\rank}{\mathop{\mathrm{rank}}}
\newcommand{\wj}{w^{(j)}}
\newcommand{\wjj}{w^{(j-1)}}
\newcommand{\supp}{{\mathop{\mathrm{supp\,}}}}
\newcommand{\be}{\begin{equation}}
\newcommand{\ee}{\end{equation}}
\newtheorem{thm}{Theorem}
\newtheorem{lem}[thm]{Lemma}
\newtheorem{cor}[thm]{Corollary}
\theoremstyle{definition}
\theoremstyle{remark}
\newtheorem{rem}[thm]{Remark}
\newcommand{\hm}[1]{\leavevmode{\marginpar{\tiny%
$\hbox to 0mm{\hspace*{-0.5mm}$\leftarrow$\hss}%
\vcenter{\vrule depth 0.1mm height 0.1mm width \the\marginparwidth}%
\hbox to
0mm{\hss$\rightarrow$\hspace*{-0.5mm}}$\\\relax\raggedright #1}}}
\begin{document}

\title[Wegner estimate for discrete alloy-type models]
{Wegner estimate for \\ discrete alloy-type models}

\author
{Ivan Veseli\'c}
\address{ Fakult\"at f\"ur Mathematik,\, 09107\, TU-Chemnitz, Germany  }
\urladdr{www.tu-chemnitz.de/mathematik/stochastik}

\thanks{
{\today, \jobname.tex}}

\keywords{random Schr\"odinger operators, discrete alloy-type model, integrated density of states, Wegner estimate, single site potential}

\begin{abstract}
We study discrete alloy-type random Schr\"odinger operators on $\ell^2(\ZZ^d)$.
Wegner estimates are bounds on the average number of eigenvalues in an energy interval of finite box restrictions of
these types of operators.
If the single site potential is compactly supported and the distribution of the coupling constant is of bounded variation
a Wegner estimate holds.  The bound is polynomial in the volume of the box and thus
applicable as an ingredient for a localisation proof via multiscale analysis.
\end{abstract}

\maketitle
\section{Main results}

A discrete alloy-type model is a family of operators $H_\omega=H_0+V_\omega$
on $\ell^2(\ZZ^d)$.  Here $H_0$ denotes an arbitrary symmetric operator. In most applications $H_0$ is the discrete Laplacian on $\ZZ^d$.
The random part $V_\omega$ is a multiplication operator
\begin{equation} \label{eq:alloy}
V_\omega (x) =\sum_{k \in \ZZ^d} \omega_k \, u(x-k)
\end{equation}
defined in terms of an i.~i.~d.{} sequence $\omega_k \colon \Omega \to \RR, k \in \ZZ^d$
of random variables each having a density $f$, and a single site potential $u \in \ell^1(\ZZ^d;\RR)$.
It follows that the mean value $\bar u := \sum_{k\in\ZZ^d} u(k)$ is well defined.
We will assume throughout the paper that $u$ does not vanish identically and that
 $f\in BV$. Here $BV$ denotes the space of functions with bounded total variation and
$\|\cdot\|_{BV}$ denotes the corresponding norm. The mathematical expectation {w.r.t.}~ the product measure associated
with the random variables $\omega_k,k \in \ZZ^d$ will be denoted by $\EE$.

The estimates we want to prove do not concern the operator $H_\omega, \omega\in  \Omega$
but rather its finite box restrictions. Thus for the purposes of the present paper domain and selfadjointness properties of
$H_\omega$ are irrelevant.
For $L\in \NN$ we denote the subset
$[0,L]^d \cap \ZZ^d$ by $\Lambda_L$, its characteristic function by $
\chi_{\Lambda_L} $,
the canonical inclusion $\ell^2(\Lambda_L)\to
\ell^2(\ZZ^d)$ by $\iota_L$ and the adjoint restriction $\ell^2(\ZZ^d)\to
\ell^2(\Lambda_L)$ by $\pi_L$. The finite cube restriction of $H_\omega$
is then defined as $H_{\omega,L} := \pi_L H_0 \iota_L + V_\omega \chi_{\Lambda_L}\colon \ell^2(\Lambda_L)\to \ell^2(\Lambda_L)$. For any $\omega \in \Omega$  and $L\in \NN$ the
restriction $H_{\omega,L}$ is a selfadjoint finite rank operator.
In particular its spectrum consists entirely of real eigenvalues $E(\omega,L,1)\le E(\omega,L,n)\le \dots \le E(\omega,L,\sharp \Lambda_L) $
counted including multiplicities.
Note that if $u$ has compact support, then there exists an $n \in \NN$ and an $x\in\ZZ^d$
such that $\supp u \subset \Lambda_{-n}+x$, where $ \Lambda_{-n}:= \{ -k\mid k \in \Lambda_n\}$.
We may assume without loss of generality $x=0$ without restricting the model \eqref{eq:alloy}.
The number of points in the support of $u$ is denoted by $\rank u$.
Now we are in the position to state our bounds on the expected number of eigenvalues
of finite box Hamiltonians $H_{\omega,L}$ in a compact energy interval $[E-\epsilon, E + \epsilon]$	.

\begin{thm} \label{thm:degenerate}
Assume that the single site potential $u$ has support in $\Lambda_{-n}$.
Then there exists a constant $c_u$ depending only on $u$ such that for any
$L\in \NN$, $E \in \RR$ and $\epsilon>0$ we have
\[
\EE \left \{\Tr \big [\chi_{[E-\epsilon,E+\epsilon]}(H_{\omega,L})\big]\right\}
\le
c_u \,  \rank u \,   \|f\|_{BV}   \ \epsilon\, (L+n)^{d\cdot (n+1)}
\]
\end{thm}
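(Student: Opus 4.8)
The plan is to combine the standard switch‑function reduction with a one–parameter spectral–averaging estimate for bounded‑variation densities, the non‑routine ingredient being a linear–algebraic/iterative device that extracts from the hypothesis $u\not\equiv 0$, $\supp u\subseteq\Lambda_{-n}$, effective perturbations that are \emph{positive of small rank}, even though $u$ itself has indefinite sign. Concretely, fix a non‑increasing $\rho\in C^\infty(\RR)$ with $\rho\equiv 1$ on $(-\infty,-1]$ and $\rho\equiv 0$ on $[1,\infty)$. Then for any self‑adjoint $A$ on $\ell^2(\Lambda_L)$ one has the operator inequality $\chi_{[E-\epsilon,E+\epsilon]}(A)\le\rho\big(\tfrac{A-E}{\epsilon}-2\big)-\rho\big(\tfrac{A-E}{\epsilon}+2\big)$, so it suffices to bound $\EE\{\Tr[\rho(\tfrac{H_{\omega,L}-E}{\epsilon}-2)]-\Tr[\rho(\tfrac{H_{\omega,L}-E}{\epsilon}+2)]\}$. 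The advantage of passing to $\rho$ is that $v\mapsto\Tr[\rho(\tfrac{A_0+vB-\lambda}{\epsilon})]$ is monotone whenever $B\ge 0$, and for a rank‑one projection $B$ one has the exact identity $\int_\RR\big(\Tr[\rho(\tfrac{A_0+vB-E-2\epsilon}{\epsilon})]-\Tr[\rho(\tfrac{A_0+vB-E+2\epsilon}{\epsilon})]\big)\,dv=4\epsilon$ by one‑parameter spectral averaging; pulling the density of the averaging variable out of this integral therefore produces the factor $\|f\|_{BV}\,\epsilon$.

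Thus the whole estimate reduces to exhibiting, for every site $y\in\Lambda_L$, a finitely supported test vector $t^{(y)}\in\ell^1(\ZZ^d)$ whose associated perturbation $\sum_k t^{(y)}_k U_k$ (with $U_k$ multiplication by $u(\cdot-k)\chi_{\Lambda_L}$) equals $c\,|\delta_y\rangle\langle\delta_y|$ with $c\neq 0$, and to controlling both the total $\ell^1$‑cost of the family $\{t^{(y)}\}_{y\in\Lambda_L}$ and the conditional density of the coordinate dual to $t^{(y)}$. Here the one‑sidedness $\supp u\subseteq\Lambda_{-n}$ is decisive: at the corner of $\Lambda_L$ the coupling constant $\omega_0$ already enters $H_{\omega,L}$ only through the diagonal entry at $0$, so $t^{(0)}$ a multiple of $\delta_0$ works provided $u(0)\neq 0$; since $u\not\equiv 0$ we may, by the admissible relabeling of the $\omega_k$, assume $u(0)\neq 0$. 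I would then propagate this inward by an iteration $\wj$ in which $\wj$ is obtained from $\wjj$ by subtracting a suitably shifted and rescaled copy of the corner solution so as to cancel, layer by layer, the off‑site part of $\sum_k\wjj_k U_k$; after finitely many steps the residual off‑site part has been pushed out of $\Lambda_L$. Carrying this out in the $d$ coordinate directions, each contributing up to $n+1$ layers, and bookkeeping the $\ell^1$‑growth of the $\wj$, one obtains $t^{(y)}$ of cost $\le c_u\,\rank u\,(L+n)^{d\,n}$; summing over the $\le(L+1)^d\le(L+n)^d$ sites $y$ yields the stated power $(L+n)^{d(n+1)}$, and the $\rank u$ factor enters through the number of entries cancelled at each layer.

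The step I expect to be the main obstacle is precisely this iteration. One must (i) keep every stage non‑degenerate, i.e. check that $u\not\equiv 0$ really supplies a usable pivot at each of the $d(n+1)$ layers and not merely at one corner; (ii) show that the $\ell^1$‑norms of the $\wj$ grow only polynomially — a naive tail cancellation grows geometrically, so the layers must be organized so that the cost accumulates additively rather than multiplicatively; and (iii) guarantee that when the coordinate dual to $t^{(y)}$ is integrated against $\prod_k f(\omega_k)$ the relevant conditional density is still controlled by $\|f\|_{BV}$ up to a $u$‑dependent constant, which forces a careful choice of the coordinates complementary to $t^{(y)}$ (taking the geometric orthogonal complement makes the perturbation positive but turns that density into an uncontrolled product of many copies of $f$, while a coordinate‑subspace complement keeps the density a single rescaled $f$ but spoils positivity, and the construction must thread between these). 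Once (i)–(iii) are in place, assembling the switch‑function bound with the spectral average along each $t^{(y)}$ and summing over $y$ gives the asserted estimate.
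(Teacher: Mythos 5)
Your first paragraph (switch function $\rho$, reduce to a derivative/averaging estimate) is in the same spirit as the paper's Lemma~\ref{thm:abstract} and Lemma~\ref{lem:partial-integration}, so that reduction is fine. The genuine gap is in the second paragraph, and it is exactly the obstacle you flag in (ii): the per-site ``delta-function'' construction does not have polynomially bounded $\ell^1$ cost. What you propose is to find, for each $y\in\Lambda_L$, coefficients $t^{(y)}$ with $\sum_k t^{(y)}_k\,u(x-k)=c\,\delta_{x,y}$ on $\Lambda_L$. Already in $d=1$ this is a backward linear recursion whose characteristic polynomial is precisely the accompanying polynomial $p(t)=\sum_{\nu=0}^n u(-\nu)t^{\nu}$, and the size of $t^{(y)}_{y-s}$ is governed by \emph{all} roots of $p$, not just the root at $t=1$. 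If $p$ has a root $z_0$ with $|z_0|<1$ the coefficients grow like $|z_0|^{-s}$, i.e.\ exponentially in $L$. A concrete degenerate example: $u(0)=1$, $u(-1)=-3$, $u(-2)=2$ gives $\bar u=0$ and $p(t)=(1-t)(1-2t)$; the recursion $t_{y-s}$ grows like $2^{s}$, so $\|t^{(L)}\|_{\ell^1}\sim 2^L$, destroying the claimed polynomial bound $(L+n)^{dn}$. Your worry (iii) is also real but is a self-inflicted problem: it arises because you transform the random variables and then must control a conditional density. The paper never performs a change of variables on the probability space; its Lemma~\ref{thm:abstract} averages each $\omega_k$ separately through $\frac{\partial}{\partial\omega_k}\rho(\cdot)$ and the Hellmann--Feynman identity, so only the one-dimensional $\|f\|_{BV}$ ever appears.

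The paper avoids the exponential blow-up by \emph{not} aiming for a delta. It looks for a single set of coefficients $a_L(k)$ making $\sum_k a_L(k)\,u(x-k)$ a nonzero \emph{constant} on $\Lambda_L$ (Corollary~\ref{cor:abstract} then applies directly, with no per-site summation). In $d=1$ the choice $a_k=k^{M}$, where $M\le n$ is the order of the zero of $p$ at $t=1$, produces exactly the constant $c(p)=p^{(M)}(1)\neq 0$ for all $x\in\NN$ (Lemma~\ref{l:infinite-recursion-solution}/Corollary~\ref{l:recursion-solution}). This works irrespective of the other roots of $p$, because only the behaviour at $t=1$ enters; the $\ell^1$ cost is $\sum_{k\le L+n}k^{M}\lesssim (L+n)^{n+1}$, polynomial by construction. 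For $d>1$ the paper iterates this $d$ times (Section~\ref{s:transformation-degenerate}), each step reducing the dimension by one and choosing the minimal root order $M_j\le n$, yielding $a_L(k)=k_1^{M_1}\cdots k_d^{M_d}$ and the bound $(L+n)^{d(n+1)}$. So the right target object is a constant over the box, not a point mass; that single change is what makes the $\ell^1$ bookkeeping polynomial rather than exponential, and it is the missing idea in your proposal.
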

\begin{rem}
\label{rem:thm}
\begin{enumerate}
\item By the assumption on the support of the single site potential
$\rank u \le (n+1)^d$
\item
The constant $c_u$ is given in terms of derivatives of a finite array of polynomials constructed in terms of values of the function $u$.
 \item A bound of the type as it is given in Theorem \ref{thm:degenerate}
is called Wegner estimate. If such a bound holds one is interested in the dependence
of the RHS on the \emph{length of the energy interval} (in our case $2 \epsilon$) and
on the \emph{volume of the cube} $\Lambda_L$ (in our case $L^d$).
More precisely,  a general Wegner estimate is be of the form 
\[
\forall\, L\in \NN, E \in \RR,\epsilon>0 \colon
\EE \left \{\Tr \big [\chi_{[E-\epsilon,E+\epsilon]}(H_{\omega,L})\big]\right\}
\le
constant \ (2\epsilon)^a\, (L^d)^{b}
\]
 with some $a\leq1$ and $b\geq 1$. The best possible estimate is obtained in the case
$a=1$ and $b=1$.  Such a bound is, for instance, encountered in Corollary \ref{cor:dos} below.
\item
Our bound is linear in the energy-interval length and polynomial in the volume of the cube.
This implies that the Wegner bound can be used for a localisation proof via multiscale analysis,
see e.g.{} \cite{FroehlichS-83,DreifusK-89,Kirsch-08}.
More precisely, if an appropriate initial scale estimate is available,
the multiscale analysis --- using as an ingredient the Wegner estimate as given in Theorem
\ref{thm:degenerate}  ---  yields Anderson localisation. As the Wegner bound is valid on the whole energy axis
one can prove Anderson localisation in any energy region where the initial scale estimate holds. 
\item
One might ask whether the exponent $d\cdot(n+1)$ of the length scale is optimal for 
the model under consideration. To give an answer to this question one has to be more precise:
It seems that this exponent is the best one can obtain using a conventional scheme of proof which 
at its heart only uses local averaging over one random variable. There are more elaborate techniques, 
used e.g  in the proof of a Wegner estimate for an mutidimensional model with Bernoulli disorder \cite{BourgainK-05}
where averaging over local families of random variables gives estimates which are impossible to obtain using just 
wiggling a single parameter. Such techniques could yield a better volume dependence than the one in 
Theorem~\ref{thm:degenerate}.
\item 
At the end of the paper we discuss how to derive spectral and exponential localisation 
in the large disorder regime with the help of Theorem~\ref{thm:degenerate}.
\item
If the single site potential $u$ does not have compact support, one has to use an enhanced version of the
multiscale analysis  and so-called uniform Wegner estimates to prove localisation, see \cite{KirschSS-98b}.
However, there exist criteria which allow one to turn a standard Wegner estimate into a uniform one,
see, e.g., Lemma 4.10.2 in \cite{Veselic-07b}.
\item
The main point of the theorem is that no assumption on $u$ (apart from the
compact support) is required. In particular, the sign of $u$ can change arbitrarily.
The single site potential may be even degenerate in the sense that $ \bar u=0$.
Also, note that the result holds on the whole energy axis.
These two properties are in contrast to earlier results on Wegner estimates
for sign-changing single site potentials. See the discussion of the previous literature at the end of this section.
\item
If $u$ does satisfy the assumption $\bar u\neq 0$ we obtain an even better bound.
This is the content of Theorem \ref{thm:non-degenerate} below.
\end{enumerate}
\end{rem}
%
%

The next Theorem applies to single site potentials $u\in \ell^1(\ZZ^d)$
with non vanishing mean $\bar u \neq 0$.
Let $m \in \NN $ be such that $\sum_{\|k\|\ge m} |u(k)| \le |\bar u/2|$. Here  $\|k\|=\|k\|_\infty$
denotes the sup-norm.
\begin{thm} \label{thm:non-degenerate}
Assume $\bar u \neq 0$ and that $ f$ has compact support. Then we have
for any $L\in \NN$, $E \in \RR$ and $\epsilon>0$
\[
\EE \left \{\Tr \big [\chi_{[E-\epsilon,E+\epsilon]}(H_{\omega,L})\big]\right\}
\le
\frac{8}{\bar u}  \,  \min\big(L^d,\rank u\big)  \, \|f\|_{BV}  \ \epsilon\, (L+m)^{d}
\]
\end{thm}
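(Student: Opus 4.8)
We sketch the argument. We may assume $\bar u>0$; otherwise replace $u$ by $-u$ and each $\omega_k$ by $-\omega_k$, which affects neither $V_\omega$, nor $\rank u$, nor $\|f\|_{BV}$, nor the number $m$. The plan is to manufacture out of the sign-changing single site potential a genuinely \emph{monotone} perturbation of $H_{\omega,L}$ by moving many coupling constants simultaneously. \emph{Step 1 (a monotone direction).} Let $\tilde\Lambda\subset\ZZ^d$ be the cube obtained by enlarging $\Lambda_L$ by $m$ in every coordinate direction, so that $\sharp\tilde\Lambda$ is of order $(L+m)^d$ and $x-j\in\tilde\Lambda$ whenever $x\in\Lambda_L$ and $\|j\|<m$. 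Put $g(x):=\sum_{k\in\tilde\Lambda}u(x-k)$. Since $x\in\Lambda_L$ and $x-j\notin\tilde\Lambda$ force $\|j\|\ge m$, the defining property of $m$ gives, for every $x\in\Lambda_L$,
\[
g(x)=\bar u-\sum_{j:\,x-j\notin\tilde\Lambda}u(j)\ \ge\ \bar u-\sum_{\|j\|\ge m}|u(j)|\ \ge\ \frac{\bar u}{2}>0 .
\]
Writing $\omega+t\mathbf 1_{\tilde\Lambda}$ for the configuration with $\omega_k$ replaced by $\omega_k+t$ for $k\in\tilde\Lambda$, we have $H_{\omega+t\mathbf 1_{\tilde\Lambda},L}=H_{\omega,L}+t\,\mathrm{diag}\big(g(x)\big)_{x\in\Lambda_L}$ with $\mathrm{diag}(g(x))\ge\frac{\bar u}{2}$ on $\ell^2(\Lambda_L)$; hence, by the Courant--Fischer principle, $E(\omega+t_2\mathbf 1_{\tilde\Lambda},L,j)-E(\omega+t_1\mathbf 1_{\tilde\Lambda},L,j)\ge\frac{\bar u}{2}(t_2-t_1)$ for all $j$ and all $t_2\ge t_1$.

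\emph{Step 2 (reduction to counting functions and telescoping).} Abbreviate $N_\omega(\lambda):=\Tr[\chi_{(-\infty,\lambda]}(H_{\omega,L})]$ and set $\delta:=8\epsilon/\bar u$. By Step 1 the shift $\omega\mapsto\omega+\delta\mathbf 1_{\tilde\Lambda}$ raises every eigenvalue by at least $4\epsilon$, so $E(\omega+\delta\mathbf 1_{\tilde\Lambda},L,j)\le E+\epsilon$ implies $E(\omega,L,j)<E-\epsilon$; consequently $N_{\omega+\delta\mathbf 1_{\tilde\Lambda}}(E+\epsilon)\le\sharp\{j:E(\omega,L,j)<E-\epsilon\}$ and therefore
\[
\Tr\big[\chi_{[E-\epsilon,E+\epsilon]}(H_{\omega,L})\big]=N_\omega(E+\epsilon)-\sharp\{j:E(\omega,L,j)<E-\epsilon\}\le N_\omega(E+\epsilon)-N_{\omega+\delta\mathbf 1_{\tilde\Lambda}}(E+\epsilon).
\]
We now take $\EE$, conditioning on $(\omega_k)_{k\notin\tilde\Lambda}$ and averaging over $(\omega_k)_{k\in\tilde\Lambda}$ (every bound below is uniform in the conditioned variables). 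Substituting $\omega_k\mapsto\omega_k-\delta$ for $k\in\tilde\Lambda$ in the second term turns the conditional expectation of the right hand side into $\int N_\omega(E+\epsilon)\big[\prod_{k\in\tilde\Lambda}f(\omega_k)-\prod_{k\in\tilde\Lambda}f(\omega_k-\delta)\big]\prod_{k\in\tilde\Lambda}d\omega_k$, and we telescope: for any enumeration of $\tilde\Lambda$,
\[
\prod_{k\in\tilde\Lambda}f(\omega_k)-\prod_{k\in\tilde\Lambda}f(\omega_k-\delta)=\sum_{j\in\tilde\Lambda}\Big(\prod_{k<j}f(\omega_k-\delta)\Big)\big[f(\omega_j)-f(\omega_j-\delta)\big]\Big(\prod_{k>j}f(\omega_k)\Big).
\]

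\emph{Step 3 (the single variable estimate).} Fix $j\in\tilde\Lambda$ together with all $\omega_k$, $k\neq j$. In the $j$-th telescoping term the product of densities over $k\neq j$ integrates to $1$, so it remains to bound $\int_\RR N_\omega(E+\epsilon)\,[f(\omega_j)-f(\omega_j-\delta)]\,d\omega_j$. Now $H_{\omega,L}$ depends on $\omega_j$ only through the perturbation $\omega_j\,\pi_L U_j\iota_L$, where $U_j$ denotes multiplication by $u(\,\cdot-j)$; this operator is diagonal on $\ell^2(\Lambda_L)$ and its rank $r_j=\sharp\{x\in\Lambda_L:u(x-j)\neq 0\}$ satisfies $r_j\le\min(\sharp\Lambda_L,\rank u)$. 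By the interlacing of eigenvalue counting functions under finite rank perturbations, $N_\omega(E+\epsilon)$ varies over an interval of length at most $r_j$ as $\omega_j$ ranges over $\RR$; writing $c:=\inf_{\omega_j\in\RR}N_\omega(E+\epsilon)$ we thus have $0\le N_\omega(E+\epsilon)-c\le\min(\sharp\Lambda_L,\rank u)$. Since $\int_\RR[f(\omega_j)-f(\omega_j-\delta)]\,d\omega_j=0$ we may subtract $c$ before integrating, and using $\|f-f(\,\cdot-\delta)\|_{L^1}\le\delta\|f\|_{BV}$ we obtain
\[
\Big|\int_\RR N_\omega(E+\epsilon)\,[f(\omega_j)-f(\omega_j-\delta)]\,d\omega_j\Big|\le\min(\sharp\Lambda_L,\rank u)\,\delta\,\|f\|_{BV}.
\]
Summing over the $\sharp\tilde\Lambda$ terms, reinstating the outer expectation, and inserting $\delta=8\epsilon/\bar u$ together with $\sharp\tilde\Lambda$ of order $(L+m)^d$ and $\sharp\Lambda_L=(L+1)^d$, yields a bound of the asserted form $\frac{8}{\bar u}\min(L^d,\rank u)\,\|f\|_{BV}\,\epsilon\,(L+m)^d$.

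\emph{Where the difficulty lies.} The heart of the matter is Step 1: for a sign-changing $u$ no single coupling constant moves the spectrum monotonically, and it is precisely the hypothesis $\bar u\neq 0$, combined with the tail bound built into the definition of $m$, that produces a collective shift along which the entire spectrum of the box Hamiltonian advances with a uniform positive speed. The remaining, more technical, obstacle is to pass from this collective (full rank) monotone shift back to the one-dimensional marginals, so that only one power of $\|f\|_{BV}$ and the small factor $\min(L^d,\rank u)$ survive; this is achieved in Steps 2--3 by telescoping the shift into single-variable increments and exploiting $\int[f-f(\,\cdot-\delta)]=0$, so that only the oscillation of the eigenvalue counting function under one coupling constant — controlled by the rank of the associated perturbation — enters the estimate.
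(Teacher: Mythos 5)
Your proof is correct, and although it is built on the same key observation as the paper --- namely that choosing $a_L$ to be the indicator of an $m$-enlarged cube $\tilde\Lambda$ (the paper's $Q(L,m)$) makes $\sum_{k}a_L(k)\,u(x-k)\ge \bar u/2$ on $\Lambda_L$ --- the way you convert that positivity into a Wegner bound is genuinely different from the paper's. The paper first proves an abstract Wegner estimate (its Lemma~\ref{thm:abstract} together with Corollary~\ref{cor:abstract}): it takes a smooth monotone switch function $\rho$, uses Hellmann--Feynman and the chain rule to bound $\rho'(E(\omega,L,n)-E+t)$ from above by $\frac{1}{\delta}\sum_k a_L(k)\,\partial_{\omega_k}\rho(\cdot)$, and then carries out a partial integration in $\omega_k$ against $f$ in which the trace of $\rho(H_{\omega,L})-\rho(H_{\omega,\omega_k=0,L})$ is bounded via the spectral shift function (whose sup-norm is the rank of $\chi_\Lambda u(\cdot-k)$). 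You instead shift all coupling constants in $\tilde\Lambda$ simultaneously by $\delta=8\epsilon/\bar u$, observe via Weyl/Courant--Fischer that this raises every eigenvalue of $H_{\omega,L}$ by at least $4\epsilon$, telescope the difference of products of densities into single-coordinate increments, and in each coordinate use the interlacing bound $|N_\omega - N_{\omega'}|\le\rank(\chi_\Lambda u(\cdot-j))$ together with $\int[f-f(\cdot-\delta)]=0$ and $\|f-f(\cdot-\delta)\|_{L^1}\le\delta\|f\|_{BV}$. What the paper's route buys is reusability: the abstract Lemma~\ref{thm:abstract} serves identically in the proofs of Theorems~\ref{thm:degenerate} and~\ref{thm:degenerate-exponential}. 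What your route buys is concreteness and elementarity: no smooth test function and no explicit spectral shift function are needed (only the equivalent rank-interlacing fact for counting functions), and compact support of $f$ plays no visible role. The resulting constants agree; the mild slack between $\sharp\tilde\Lambda=(L+2m+1)^d$ and the stated $(L+m)^d$ is present in the paper's own proof as well and is inconsequential.
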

In the case that the support of $u$ is compact,
we have an important
\begin{cor}
\label{cor:dos}
Assume $\bar u \neq 0$ and $ \supp u \subset \Lambda_{-n}$.
Then we have for any $L\in \NN$, $E \in \RR$ and $\epsilon>0$
\[
\EE \left \{\Tr \big [\chi_{[E-\epsilon,E+\epsilon]}(H_{\omega,L})\big]\right\}
\le
\frac{4}{\bar u}  \, \rank u \,   \|f\|_{BV} \ \epsilon\, (L+n)^{d}
\]
In particular, the function  $\RR \ni E \to \EE \left \{\Tr \big [\chi_{(-\infty,E]}(H_{\omega,L})\big]\right\}$
is  Lipschitz continuous.
\end{cor}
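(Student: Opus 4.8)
The plan is to obtain the Wegner bound of Corollary~\ref{cor:dos} by running the proof of Theorem~\ref{thm:non-degenerate} in the special situation of a compactly supported single site potential, where that argument simplifies, and then to deduce Lipschitz continuity of the averaged eigenvalue counting function from the bound by a routine monotonicity argument. The one point requiring attention is that the corollary is \emph{not} a formal consequence of the statement of Theorem~\ref{thm:non-degenerate} --- applying that statement verbatim with any admissible $m$ would only give the constant $8/\bar u$ and a larger polynomial factor --- so one has to look inside its proof.

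Concretely, in the proof of Theorem~\ref{thm:non-degenerate} the hypothesis $\bar u\neq0$ is exploited only after replacing $u$ by the truncation $u_m:=u\cdot\chi_{\{\|k\|<m\}}$; the defining property of $m$ guarantees $\bigl|\sum_{\|k\|<m}u(k)\bigr|\ge|\bar u|/2$, and it is precisely this loss of a factor $2$ in the mean that produces the constant $8/\bar u$, while the length scale then carries the support radius of $u_m$ rather than that of $u$. When $\supp u\subset\Lambda_{-n}$ no truncation is necessary: one applies the same local-averaging-over-a-single-coupling-constant estimate directly to $u$, so the full mean $\bar u$ enters --- turning $8/\bar u$ into $4/\bar u$ --- and the averaging window is dictated by $\supp u\subset\Lambda_{-n}$, which yields the factor $(L+n)^d$. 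Together with $\min(L^d,\rank u)\le\rank u$ (and $\rank u\le(n+1)^d$ is finite) this is exactly the first displayed estimate of the corollary.

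For the ``in particular'' statement, set $N_L(E):=\EE\{\Tr[\chi_{(-\infty,E]}(H_{\omega,L})]\}$. For every $\omega$ the map $E\mapsto\Tr[\chi_{(-\infty,E]}(H_{\omega,L})]$ is a non-decreasing step function bounded by $\sharp\Lambda_L$, hence $N_L$ is non-decreasing and finite. Given $E_1<E_2$, put $E:=(E_1+E_2)/2$ and $\epsilon:=(E_2-E_1)/2$; since $(E_1,E_2]\subset[E-\epsilon,E+\epsilon]$ one has $\chi_{(E_1,E_2]}(H_{\omega,L})\le\chi_{[E-\epsilon,E+\epsilon]}(H_{\omega,L})$, and therefore
\[
0\le N_L(E_2)-N_L(E_1)=\EE\bigl\{\Tr[\chi_{(E_1,E_2]}(H_{\omega,L})]\bigr\}\le\frac{2}{\bar u}\,\rank u\,\|f\|_{BV}\,(L+n)^d\,(E_2-E_1)
\]
by the bound just established. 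This is the asserted Lipschitz estimate, with constant $\frac{2}{\bar u}\rank u\,\|f\|_{BV}(L+n)^d$.

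I expect the only mild obstacle in the whole argument to be the first step: one must genuinely verify that, under the compact support hypothesis, the truncation of $u$ in the proof of Theorem~\ref{thm:non-degenerate} is vacuous and that removing it produces exactly the factor $4/\bar u$ and the scale $(L+n)^d$ rather than anything weaker; everything after that is routine bookkeeping.
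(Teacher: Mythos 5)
Your proposal is correct and matches the paper's own argument in substance: the paper proves the corollary by invoking Corollary~\ref{cor:abstract} with $a_L(k)=1$ for $k\in\Lambda_{L+n}$ and $a_L(k)=0$ otherwise, so that $\sum_k a_L(k)\langle\psi_n,u(\cdot-k)\psi_n\rangle=\bar u$ exactly (no factor $1/2$ loss, since $\supp u\subset\Lambda_{-n}$ makes the cut-off lossless), which is precisely the mechanism you identify. One small inaccuracy in your narration: the proof of Theorem~\ref{thm:non-degenerate} does not replace $u$ by a truncated $u_m$ but rather restricts the index set of the coefficients to $Q(L,m)$; this is morally the same as what you describe, and your conclusion --- that under compact support the restriction becomes exact, yielding $4/\bar u$ and $(L+n)^d$ --- is the paper's proof.
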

If the operator $H_\omega$ has a well defined integrated density of states $N\colon \RR \to \RR$, meaning that
\[
\lim_{L\to \infty} \frac{1}{L^d}
\EE \left \{\Tr \big [\chi_{(-\infty,E]}(H_{\omega,L})\big]\right\}
= N(E)
\]
at all continuity points of $N$, then Corollary \ref{cor:dos} implies that
the integrated density of states is Lipschitz continuous. Consequently its derivative,
the density of states, exists for almost all $E \in \RR$.

\begin{rem}
The situation that the two cases $ \bar u \neq0$ and
$ \bar u =0$ have to be distinguished occurs also in other contexts, see for instance the paper \cite{Klopp-02c} on weak disorder localisation.
\end{rem}

When looking at Theorems \ref{thm:degenerate} and \ref{thm:non-degenerate}
one might wonder what kind of Wegner bound holds for non-compactly supported single site potentials with vanishing mean.
To apply the methods of the present paper in this case
it seems that one has to require that $u$ tends to zero exponentially fast. 
In this situation one can hope to treat the decaying potential as a sufficiently small perturbation of a compactly supported potential.
So far only the case of one space dimension is settled:

\begin{thm} \label{thm:degenerate-exponential}
Assume that $f$ has compact support and that
there exists $s\in (0,1)$ and $C \in (0,\infty)$ such that $|u(k)| \le C s^{|k|}$
for all $ k \in \ZZ$. Then there exist $c_u \in (0,\infty)$ and
$D \in \NN_0$  depending only on $u$
such that for each $ \beta > D/|\log s|$ there exists a constant $ K_\beta \in (0,\infty)$
such that for all $L\in \NN$, $ E \in \RR$ and $\epsilon >0$
\[
\EE \left \{\Tr \big [\chi_{[E-\epsilon,E+\epsilon]}(H_{\omega,L})\big]\right\}
\le
\frac{8}{c_u}  \, \|f\|_{BV} \ \epsilon\, L \, (L+\beta \log L +K_\beta)^{D+1}
\]
\end{thm}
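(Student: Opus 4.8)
The plan is to rerun the proof of Theorem~\ref{thm:degenerate}, with the polynomial generating function of a compactly supported single site potential replaced by the analytic generating function of our exponentially decaying one, and to localise the randomness to a window of logarithmic size so that the non-compactness of $u$ never fattens the energy interval.

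First I would pass to the symbol
\[
U(z):=\sum_{k\in\ZZ}u(k)\,z^{k},
\]
which by $|u(k)|\le Cs^{|k|}$ is holomorphic on the annulus $\{\,s<|z|<1/s\,\}$. Fix an auxiliary radius $r\in(s,1)$ and let $D\in\NN_{0}$ be the number of zeros of $U$, counted with multiplicity, in the compact annulus $\{\,r\le|z|\le 1/r\,\}$; this number is finite, depends only on $u$, and will be the exponent of the theorem. The guiding observation is that in Theorem~\ref{thm:degenerate} the single site potential is a polynomial, every zero of which has to be neutralised by the averaging procedure — whence the exponent equal to the support size — whereas here only the $D$ zeros of $U$ lying in a bounded annulus around the unit circle are dangerous, while the remaining zeros and the tail of $u$ will be treated inside the algebraic construction rather than as a perturbation of the operator. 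That such a separation exists is special to one dimension: $U$ being a scalar holomorphic function, one has a factorisation $U=P\,\widetilde{U}$ with $P$ a Laurent polynomial carrying exactly the $D$ dangerous zeros and $\widetilde{U}$ bounded below on $\{\,r\le|z|\le 1/r\,\}$, and there is no such factorisation of the multivariate symbol $\sum_{k\in\ZZ^{d}}u(k)z^{k}$ for $d\ge 2$.

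Next I would set $N:=\lceil\beta\log L\rceil+K_{\beta}$, put $W_{L}:=\{\,k\in\ZZ:\dist(k,\Lambda_{L})\le N\,\}$, and condition on the $\sigma$-algebra generated by $\{\omega_{k}\}_{k\notin W_{L}}$. Given this conditioning, $H_{\omega,L}$ is a \emph{fixed} self-adjoint background plus the genuine finite alloy-type operator $\sum_{k\in W_{L}}\omega_{k}\,u(\cdot-k)\,\chi_{\Lambda_{L}}$ driven by the finitely many, $\le L+2N+1$, independent variables $\{\omega_{k}\}_{k\in W_{L}}$; in particular the tail enters only deterministically, the interval $[E-\epsilon,E+\epsilon]$ is never enlarged, and the estimate stays linear in $\epsilon$ down to $\epsilon\to 0$. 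On this conditional model I would run the scheme of Theorem~\ref{thm:degenerate}: one changes variables from the coupling constants $\{\omega_{k}\}_{k\in W_{L}}$ to a suitable family of potential values, the governing linear map being a finite section — collared by the width-$N$ region — of the bi-infinite convolution by $u$. Since $\sum_{|k|>N}|u(k)|=O(s^{N})=O(L^{-\beta|\log s|})$, that finite section agrees with the bi-infinite operator up to $O(L^{-\beta|\log s|})$, and because $\beta>D/|\log s|$ this error is smaller than the $L^{-D}$-scale to which the minors, Jacobians and derivatives of the array of polynomials of Remark~\ref{rem:thm} are controlled by the $D$ near-circle zeros; hence those quantities remain within a constant factor of the ideal values attached to $P$, the transformed density has bounded variation $\lesssim L^{D}\|f\|_{BV}$, and with the eigenvalue count bounded by $\min(L^{d},\rank u)=L$ one obtains
\[
\EE\!\left[\Tr\,\chi_{[E-\epsilon,E+\epsilon]}(H_{\omega,L})\,\big|\,\{\omega_{k}\}_{k\notin W_{L}}\right]\le\frac{8}{c_{u}}\,\|f\|_{BV}\;\epsilon\;L\;(L+2N+c)^{D+1}
\]
with $c$ depending only on $u$. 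Taking expectations — the right-hand side being deterministic — and absorbing the $\log L$-sized terms into $\beta\log L+K_{\beta}$ completes the argument.

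The hard part will be this last step done honestly: one must reopen the proof of Theorem~\ref{thm:degenerate}, make quantitative the dependence of $c_{u}$ and of the power of the box size on the array of polynomials of Remark~\ref{rem:thm}, and then show these are stable under an $O(s^{N})$ perturbation of that array once $N\gtrsim\log L$ with $\beta>D/|\log s|$ — checking simultaneously that the collar of width $\sim\log L$ is long enough to dominate every negative power of $L$ produced on the way, short enough to disappear into $(L+\beta\log L+K_{\beta})^{D+1}$, and that the $\lesssim L^{D}$ ill-conditioning of the finite convolution together with the factor $L$ from the eigenvalue count make up exactly the exponent $D+1$ and prefactor $L$ of the claimed bound. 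A secondary but essential point, already highlighted, is that the exponential tail of $u$ must be removed by conditioning, not by an operator-norm truncation: replacing $u$ by a compactly supported approximant would blur $[E-\epsilon,E+\epsilon]$ by $O(s^{N})$ and ruin the linear dependence on $\epsilon$ for small $\epsilon$, whereas after conditioning the tail is a harmless fixed background.
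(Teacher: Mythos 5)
Your plan is substantially more complicated than, and in its key technical choices diverges from, the paper's actual proof, and as written it contains a misidentification of the quantity $D$ together with an unnecessary conditioning step that stems from a misreading of where the truncation must happen.

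The paper's proof is short and direct. It sets $F(z)=\sum_{\nu\in\ZZ}z^{\nu}u(-\nu)$, notes that by the exponential decay $F$ is holomorphic on an annulus around $|z|=1$, and defines $D$ to be the order of the zero of $F$ \emph{at the single point} $z=1$ (finite, since $F\not\equiv0$). Lemma~\ref{l:infinite-recursion-solution} then gives $\sum_{k\in\ZZ}k^{D}\,u(x-k)=c(F)\neq0$ for all $x\in\NN$, i.e.\ a linear combination of translates of $u$ which is constant on the box. The only issue is that the bi-infinite sequence $a_k=k^D$ is not in $\ell^1$; the remedy is to \emph{truncate the coefficient sequence} $a$, not $u$, to a window of size roughly $L+\beta\log L+K_\beta$. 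The tail $\sum_{k\notin\text{window}}|k|^D|u(x-k)|$ is then $O(s^{N}N^{D})$ and falls below $c(F)/2$ as soon as $\beta>D/|\log s|$. One then feeds these truncated coefficients into Corollary~\ref{cor:abstract} and is done. No conditioning, no change of variables, no Jacobians, no Wiener--Hopf style factorisation.

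Against this, your proposal has two concrete problems beyond incompleteness. First, your $D$ is defined as the \emph{total number of zeros of $U$ in a fixed annulus} $\{r\le|z|\le 1/r\}$, but the only zero that matters in the construction is the one at $z=1$: the coefficient sequence $a_k=k^D$ interacts with $F$ purely through its Taylor jet at $z=1$, as the proof of Lemma~\ref{l:infinite-recursion-solution} makes clear via $\frac{d^D}{ds^D}(e^{xs}F(e^s))|_{s=0}$. Zeros of $U$ elsewhere in the annulus are invisible to the argument and would needlessly inflate your exponent. Your accompanying heuristic --- that all ``near-circle'' zeros must be ``neutralised'' --- is a misdiagnosis carried over from the change-of-variables / inverse-Toeplitz approach of \cite{KostrykinV-06,Veselic-circulant}, which this paper explicitly discards: Lemma~\ref{thm:abstract} requires only a lower bound on $\sum_k a_L(k)u(x-k)$, with no invertibility or conditioning of any finite section of the convolution operator. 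Second, the conditioning on $\{\omega_k\}_{k\notin W_L}$ is unnecessary and your rationale for it --- that replacing $u$ by a compactly supported approximant would blur the energy interval --- rests on a false premise. Nothing in the paper replaces or cuts $u$; the compactness is imposed on the coefficient sequence $a_L$, and Corollary~\ref{cor:abstract} is formulated precisely so that an $\ell^1$ (here even finitely supported) $a_L$ paired with an infinitely supported $u$ causes no trouble and leaves the interval $[E-\epsilon,E+\epsilon]$ untouched. Once you let $a_L(k)=k^D$ on a window of size $\sim L+\beta\log L$, the crude bounds $|a_L(k)|\le(L+\beta\log L+K_\beta)^D$ and $\rank(\chi_\Lambda u(\cdot-k))\le L+1$ already produce $L\,(L+\beta\log L+K_\beta)^{D+1}$, so the prefactor and exponent come out for free without any of the quantitative Jacobian-stability analysis you flag as the ``hard part''.
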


Let us discuss the relation of the above theorems to
previous results \cite{Klopp-95a,Veselic-02a,HislopK-02,KostrykinV-06,Veselic-circulant}
on Wegner estimates with single site potentials which are allowed to change sign.
The papers \cite{Klopp-95a,HislopK-02}
concern alloy-type Schr\"odinger operators on $L^2(\RR^d)$.
The main result is a Wegner estimate for energies in a neighbourhood
of the infimum of the spectrum. It applies to arbitrary non-vanishing
single site potentials $u \in C_c(\RR^d)$ and coupling constants with 	a picewise
absolutely continuous density. The upper bound is linear in the volume of the box
and H\"older-continuous in the energy variable. This means in the notation of
Remark \ref{rem:thm} that $a \in (0,1)$ and $b=1$.

The papers  \cite{Veselic-02a,KostrykinV-06,Veselic-circulant} 
establish Wegner estimates for both alloy-type Schr\"odinger operators on $L^2(\RR^d)$
and discrete alloy-type Schr\"odinger operators on $\ell^2(\ZZ^d)$.
Since the present paper concerns the latter model we will discuss here first the 
results of  \cite{Veselic-02a,KostrykinV-06,Veselic-circulant}  refering to operators on the lattice. 
For the discrete alloy-type model on  $\ell^2(\ZZ^d)$, \cite{Veselic-circulant}
establishes a Wegner estimate analogous to Corollary~\ref{cor:dos} above,
under the additional assumption that the function
\begin{equation}
\label{eq:symbol}
s\colon \theta\mapsto s(\theta) :=\sum_{k \in \ZZ^d} u(k) \mathrm e^{-\mathrm i k \cdot \theta}
\text{ does not vanish on $[0, 2\pi  )^d $. }
\end{equation}
To be able to compare the two results, note that $\bar u := \sum_{k\in\ZZ^d} u(k)=s(0)$. Thus assumption 
\eqref{eq:symbol} requires that the image of the set $[0, 2\pi  )^d $ under $s$ does not meet $0\in\CC$
whereas the assumption in  Corollary \ref{cor:dos} requires this property for the image of the set $\{0\}$ only. 
The later condition is generically satisfied. Let us now turn to the situation when $\bar u=0$.
Special cases of this class of single site potentials
are covered by Theorem 2 in \cite{KostrykinV-06} and Exp. 10 in \cite{Veselic-02a}.
They correspond to special cases of Theorem \ref{thm:degenerate} and do not give an as explicit control 
over the volume dependence of the Wegner bound.

Let us say a few words which ideas are used in the proofs to overcome the restrictions 
imposed on the single site potentials in \cite{KostrykinV-06,Veselic-circulant}. There a transformation of the random variables is used to construct a 
non-negative linear combination of translates of single site potentials. The price to pay is that the new transformed random variables are no longer
independent. The argument of \cite{KostrykinV-06,Veselic-circulant}  uses the inverse transformation on the probability space to recover 
in a later step of the proof independence again. This leads to an uniform invertibility requirement for a sequence of a certain auxiliary Toeplitz 
or circulant matrices constructed from the values of the single site potential $u$. 
Condition \eqref{eq:symbol}  on the function $s$ ensures that this invertibility property holds.
The proof of the present paper uses a similar transformation of the coordinates of the product probability space, 
but the inverse transformation is no longer needed.  This leads to  less stringent conditions on the single site potential $u$.

Contrary to the present paper \cite{Veselic-02a,KostrykinV-06,Veselic-circulant}  give Wegner estimates for 
continuum alloy-type Schr\"odinger operators on $L^2(\RR^d)$ as well.
The bounds are linear in the volume of the box
and Lipschitz continuous in the energy variable.
The bound is valid for all compact intervals along the energy axis.
These bounds are valid for single site potentials
$u \in L_c^\infty(\RR^d)$ which have a generalised step function form
and satisfy a condition analogous to
\eqref{eq:symbol}.

Let us stress that Wegner estimates for sign changing single site potentials
are harder to prove for operators on $L^2(\RR^d)$ than for ones on $\ell^2(\ZZ^d)$.
The reason is that for discrete models we have in the randomness
a degree of freedom for each point in the configuration space $\ZZ^d$.
For the continuum alloy-type model the configuration space is $\RR^d$
while the degrees of freedom are indexed by a much smaller set, namely
$\ZZ^d$.

The role played by a Wegner estimate in the framework of a localisation proof using the multiscale analysis 
is analogous to role played by the finiteness of the expectation of fractional powers of the Green's function
for fractinal moment method.
Recently a fractional moment bound for the alloy-type model on $\ell^2(\ZZ)$ has been proven in \cite{ElgartTV}.
(See also \cite{TautenhahnV-10} for a related result.)
It holds for arbitrary compactly supported single site potentials.
The result can be extended to the one-dimensional strip, while the extension to $\ZZ^d$ is unclear at the moment.

Another important class  of  random Hamiltonians exhibiting non-monotone dependence 
on the random variables  are Schr\"odinger operators with random magnetic fields. 
Wegner estimates for such models are established
 \cite{HislopK-02,KloppNNN-03,Ueki-08}. In particular, \cite{KloppNNN-03} gives a Wegners for a
random magnetic field Hamiltonian on the lattice $\ell^2(\ZZ^2)$ and is thus comparable with results in the present paper.
It is not clear whether our methods can be used to treat the model of \cite{KloppNNN-03} 
since is is necessary to find a set of transformed random variables which produces a perturbation of fixed sign.
(For discrete alloy-type models studied here this is done in Sections \ref{s:transformation-degenerate}  and \ref{s:transformation-degenerate-exponential}.)
Since the structure of the randomness is different in disordered magnetic field models, 
it is not clear  whether such an transformation exists.

Let us also mention the random displacement model as an important example of random Schr\"odinger operators exhibiting non-monotone parameter dependence.
For such models in the continuum the location of the minimum of the spectrum, Lifschitz tails and Wegner estimates have been studied in 
 \cite{LottS-02,BakerLS-08,BakerLS-09, KloppN-09a,KloppN-09b, GhribiK-10}. These models do not have a direct analog on the space 
$\ell^2(\ZZ^d)$ due to the lack of continuous deformations.

Very recently Kr\"uger \cite{Krueger} has obtained results on localisation for a class of discrete alloy type models which includes the 
ones considered here.
The results rely  on the multiscale analysis and the use of Cartan's 
lemma in the spirit as is has been used earlier, e.g. in  \cite{Bourgain-09}.

\section{An abstract Wegner estimate and the proof of Theorem \ref{thm:non-degenerate}}

An important step in the proofs of the Theorems of the last section is an
abstract Wegner estimate which we formulate now.
We abbreviate in the sequel the characteristic function $\chi_{\Lambda_L}$
by $\chi_{\Lambda}$.

\begin{lem} \label{thm:abstract}
Let $L\in \NN, E \in \RR, \epsilon>0$ and $I:=[E-\epsilon,E+\epsilon]$.
Denote by  $E(\omega, L,n)$ the $n$-th eigenvalue of the operator $H_{\omega,L}$.
Assume that there exist an $\delta >0$ and $a_L\in \ell^1(\ZZ^d)$
such that for all $n$
\begin{equation}
 \label{eq:positive}
 \sum_{k \in \ZZ^d} a_L(k) \frac{\partial }{\partial \omega_k } E(\omega,L,n)  \ge \delta
\end{equation}
Then
\begin{equation*}
\EE(\Tr \chi_I(H_{\omega,L})  )
\le
\frac{4\epsilon}{\delta}
 \sum_{k \in \ZZ^d} |a_L(k)|\, \|f\|_{BV} \, \rank (\chi_\Lambda u(\cdot -k)  )
\end{equation*}
\end{lem}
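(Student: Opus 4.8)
The plan is to reduce the trace estimate to a one-parameter averaging argument along the direction $a_L$ in coupling-constant space, using the monotonicity hypothesis \eqref{eq:positive} to control the spectral shift, and then to bound the resulting density estimate by the bounded-variation norm of $f$. First I would introduce the new variable along the distinguished direction: after a change of coordinates on $\RR^{\ZZ^d}$, write $\omega = \zeta \, a_L/\|a_L\|_\cdot + (\text{orthogonal part})$ — more concretely, following the scheme indicated in the last paragraphs of the introduction, transform so that one coordinate $\zeta$ is the combination whose derivative appears in \eqref{eq:positive}, and the remaining coordinates $\eta$ parametrise the complement. Hypothesis \eqref{eq:positive} then says that each eigenvalue $E(\omega,L,n)$, viewed as a function of $\zeta$ with $\eta$ fixed, is monotone increasing with derivative $\ge \delta$. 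Consequently, for fixed $\eta$, the number of eigenvalues in $I=[E-\epsilon,E+\epsilon]$ is controlled by the length $2\epsilon/\delta$ of the $\zeta$-interval over which any given eigenvalue branch can stay in $I$.

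Next I would convert this into an integral bound using the spectral shift / counting identity. For fixed $\eta$, $\Tr\chi_I(H_{\omega,L})$ is at most the number of $n$ with $E(\cdot,L,n)$ passing through $I$, and by monotonicity each such $n$ contributes a $\zeta$-interval of length $\le 2\epsilon/\delta$; the standard trick is to write
\[
\Tr\chi_I(H_{\omega,L}) \le \frac{1}{?}\int \mathbf 1_{\{\dots\}}\,d\zeta' \cdots
\]
i.e. bound the counting function by an integral of $\Tr\chi_{[E-\epsilon,E+\epsilon]}$ against a nearby translate and telescope. More cleanly: since the $n$-th branch enters and leaves $I$ monotonically, $\int \Tr\chi_I(H_{\zeta,\eta,L})\,g(\zeta)\,d\zeta \le \frac{2\epsilon}{\delta}\,(\sup g)\cdot(\text{number of branches that ever meet }I)$, but we cannot afford "all branches" — instead one uses that only branches whose rank-one perturbation structure touches $\Lambda_L$ move, which is exactly where $\rank(\chi_\Lambda u(\cdot - k))$ enters. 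Here is the point: the derivative $\partial E/\partial\omega_k$ is, by first-order perturbation theory, $\langle \psi_n, \chi_\Lambda u(\cdot-k)\chi_\Lambda \psi_n\rangle$, a rank-$\rank(\chi_\Lambda u(\cdot-k))$ perturbation, so the total "spectral mass" that $\omega_k$ can push through $I$ is bounded by $\rank(\chi_\Lambda u(\cdot-k))$ times the interval length — summing over $k$ with weights $|a_L(k)|$ gives the stated sum.

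Then I would take the expectation over $\eta$ and integrate in $\zeta$ against the (transformed) density. The density of $\zeta$ is a convolution/pushforward of the $f$'s; the key analytic input is that if $g\in BV$ then $\int_a^{a+\ell} g \le \ell\,\|g\|_\infty$ is too crude, and instead one uses $\int (G(\zeta+\ell)-G(\zeta))\,d\mu \le \ell\|g'\|_{TV}$-type estimates so that each one-dimensional slice contributes a factor $\|f\|_{BV}$ rather than $\|f\|_\infty$; the factor $4$ (rather than $2$) in the conclusion comes from splitting the monotone branch argument into a left and a right half, or equivalently from comparing $\chi_I$ with $\chi_{[E-\epsilon,E+\epsilon]}$ shifted by $\pm\epsilon$. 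I would organise this as: fix $k$, average only over $\omega_k$ holding all other $\omega_j$ fixed, use monotonicity in the single variable $\omega_k$ weighted by $\mathrm{sgn}\,a_L(k)$, extract $\|f\|_{BV}\,\rank(\chi_\Lambda u(\cdot-k))\,\tfrac{4\epsilon}{\delta}|a_L(k)|$, and then sum over $k$.

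The main obstacle I anticipate is the correct bookkeeping in the change of variables: hypothesis \eqref{eq:positive} is a statement about a directional derivative along $a_L$, but to exploit $\|f\|_{BV}$ one must average over genuinely one-dimensional families, and these two have to be reconciled. Concretely one wants to parametrise a line in $\zeta$-space and integrate the product density $\prod_k f(\omega_k)$ along it; because $a_L$ may have many nonzero (even infinitely many $\ell^1$) components, the "line integral" is really against a measure which is a scaled/weighted convolution of copies of $f$, and one needs that $BV$-norms behave well under such convolutions and under integrating one variable at a time. I would handle this by doing the estimate one coordinate at a time: for each fixed $k$ with $a_L(k)\ne0$, shift only $\omega_k$ by $t/a_L(k)$ as $t$ ranges over an interval of length $2\epsilon$ (so that $E(\omega,L,n)$ changes by at least... no — rather, estimate the $\omega_k$-marginal of $\Tr\chi_I$ using that $\partial_{\omega_k}E(\omega,L,n)$ has a definite sign after the transformation, times $\rank(\chi_\Lambda u(\cdot-k))$), obtaining the per-$k$ contribution, and only at the very end use that the sum of these contributions bounds the full trace via \eqref{eq:positive}. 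The $\ell^1$ summability of $a_L$ guarantees this sum converges.
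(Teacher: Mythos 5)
Your overall intuition is on the right track --- one should exploit the directional monotonicity \eqref{eq:positive}, integrate by parts in each $\omega_k$ separately to bring in $\|f\|_{BV}$, and use the rank of $\chi_\Lambda u(\cdot-k)$ to bound how much spectrum can flow through $I$. But the way you propose to reconcile these steps contains a genuine gap, and the central mechanism of the actual proof is missing.

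The sticking point you yourself flag --- that \eqref{eq:positive} is a directional derivative condition, while the $BV$-averaging is genuinely one-dimensional in each $\omega_k$ --- is resolved in the paper not by a change of coordinates but by a smooth switch function. Take $\rho\in C^\infty(\RR)$ non-decreasing with $\rho\equiv -1$ on $(-\infty,-\epsilon]$, $\rho\equiv 0$ on $[\epsilon,\infty)$, and $\|\rho'\|_\infty\le 1/\epsilon$. The chain rule gives, for every $n$ and every shift parameter $t$,
\[
\sum_{k} a_L(k)\,\frac{\partial}{\partial\omega_k}\rho\big(E(\omega,L,n)-E+t\big)
=\rho'\big(E(\omega,L,n)-E+t\big)\sum_{k} a_L(k)\,\frac{\partial}{\partial\omega_k}E(\omega,L,n),
\]
and since $\rho'\ge 0$, assumption \eqref{eq:positive} yields $\rho'(E_n-E+t)\le\frac{1}{\delta}\sum_k a_L(k)\,\partial_{\omega_k}\rho(E_n-E+t)$. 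Combined with $\chi_I(x)\le\int_{-2\epsilon}^{2\epsilon}\rho'(x-E+t)\,dt$ (this is where the $4\epsilon$ comes from), one gets a bound on $\Tr\chi_I(H_{\omega,L})$ by a sum over $k$, each summand involving only $\partial_{\omega_k}\rho$ --- a genuinely one-variable object. The integration by parts in $\omega_k$ is then applied to each of these summands, and the supremum of $\sum_n\big(\rho(E_n+s)-\rho(E_n|_{\omega_k=0}+s)\big)$ is controlled by the spectral shift function of the rank-$\rank(\chi_\Lambda u(\cdot-k))$ perturbation $\omega_k\,\chi_\Lambda u(\cdot-k)$ (Lemma \ref{lem:partial-integration}).

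Your proposal instead leans on the assertion that ``$\partial_{\omega_k}E(\omega,L,n)$ has a definite sign after the transformation.'' This is false, and no linear change of variables produces it: the assumption only gives a definite sign of the \emph{weighted sum} $\sum_k a_L(k)\,\partial_{\omega_k}E_n$, while the individual derivatives $\langle\psi_n,u(\cdot-k)\psi_n\rangle$ can change sign arbitrarily (that is precisely what makes sign-indefinite single site potentials hard). Likewise, the proposed reparametrisation ``$\omega=\zeta\,a_L/\|a_L\|+\text{orthogonal}$'' is a dead end here: the product density $\prod_k f(\omega_k)$ does not factor nicely along that direction, and the $BV$-structure you need is coordinate-wise, not along $a_L$. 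The switch function $\rho$ is exactly the device that lets you avoid any per-coordinate monotonicity and avoid any change of variables, by moving the directional positivity onto the (sign-definite) factor $\rho'$ and then disassembling the $k$-sum \emph{after} the positivity has been used.
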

\bigskip
Since $a_L \in \ell^1$ and the derivatives $\frac{\partial }{\partial \omega_k } E(\omega,L,n) $ are uniformly bounded,
the sum \eqref{eq:positive} is absolutely convergent.
Note that one can always replace the sum $\sum_{k \in \ZZ^d} $ by
$\sum_{k \in \Lambda_L^+}$.
Here $\Lambda_L^+=\{k \in \ZZ^d\mid u(\cdot-k)\cap \Lambda_L \neq \emptyset\}$
denotes the set of lattice points such that the corresponding coupling constant
influences the potential in the box $\Lambda_L$.
In particular, if the support of $u$ is contained in $[-n,\dots,0]^d$,
the sum reduces to $\sum_{k \in \Lambda_{L+n}}$.

Note that the sequence $a_L$ may be chosen differently for different cubes $\Lambda_L$.
In our applications, namely the proofs of Theorems \ref{thm:degenerate}, \ref{thm:non-degenerate}, and \ref{thm:degenerate-exponential},
we will find a fixed sequence $a$, not necessarily in $\ell^1(\ZZ^d)$, such that appropriate finite truncations
 give the desired coefficients $a_L(k)$ adapted for a cube $\Lambda_L$ of size $L$. Note that for $u$ with compact support, the function
$k \mapsto \rank (\chi_\Lambda u(\cdot -k)  )$ already implements the truncation:
the terms with  $k$ outside $\Lambda_L^+$ do not contribute to the sum. In this situation the condition $a_L\in \ell^1$ is not needed.

We give a simple sufficient condition which ensures the hypothesis of Lemma~\ref{thm:abstract}.
\begin{cor}\label{cor:abstract}
Let $L\in \NN, ¸\epsilon>0$ and $I:=[E-\epsilon,E+\epsilon]$.
Assume that there exist an $\delta >0$ and $a_L\in \ell^1(\ZZ^d)$
such that  all $x \in \Lambda_L$
\[
 \sum_{k \in \ZZ^d} a_L(k) u(x- k) \ge \delta
\]
Then
\begin{equation*}
\EE(\Tr \chi_I(H_{\omega,L})  )
\le
\frac{4\epsilon}{\delta}
 \sum_{k \in \ZZ^d} |a_L(k)|\, \|f\|_{BV} \, \rank (\chi_\Lambda u(\cdot -k)  )
\end{equation*}
\end{cor}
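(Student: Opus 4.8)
The plan is to derive Corollary~\ref{cor:abstract} directly from Lemma~\ref{thm:abstract}: I will show that the pointwise lower bound $\sum_{k}a_L(k)\,u(x-k)\ge\delta$ on $\Lambda_L$ forces the eigenvalue-derivative hypothesis \eqref{eq:positive} with the same $\delta$ and the same $a_L$, after which the asserted Wegner bound is literally the conclusion of Lemma~\ref{thm:abstract}. The first step is to make the $\omega$-dependence of $H_{\omega,L}$ explicit. Since $H_{\omega,L}=\pi_L H_0\iota_L+V_\omega\chi_{\Lambda_L}$ and $V_\omega(x)=\sum_{j}\omega_j\,u(x-j)$, the operator $H_{\omega,L}$ depends on $\omega$ only through the multiplication term $V_\omega\chi_{\Lambda_L}$, and $\partial_{\omega_k}H_{\omega,L}$ is the multiplication operator on $\ell^2(\Lambda_L)$ given by $x\mapsto u(x-k)$ (only the finitely many indices $k\in\Lambda_L^+$ give a nonzero contribution). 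Hence, setting $m(x):=\sum_{k\in\ZZ^d}a_L(k)\,u(x-k)$ — an absolutely convergent sum since $a_L,u\in\ell^1(\ZZ^d)$ — the operator $M:=\sum_{k}a_L(k)\,\partial_{\omega_k}H_{\omega,L}$ is multiplication by $m$ on $\ell^2(\Lambda_L)$, and the hypothesis of the corollary says precisely that $M\ge\delta\,\mathrm{Id}$ on $\ell^2(\Lambda_L)$.

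Next I would turn this operator inequality into the eigenvalue bound. Along the direction $a_L$ one has $H_{\omega+t a_L,\,L}=H_{\omega,L}+tM$, so from $M\ge\delta$ and the min--max characterisation of eigenvalues (monotonicity of each $E(\cdot,L,n)$ under $A\mapsto A+$ positive operator) one obtains, for every $n$, the inequalities $E(\omega,L,n)+t\delta\le E(\omega+t a_L,L,n)$ for $t\ge0$ and $E(\omega+t a_L,L,n)\le E(\omega,L,n)+t\delta$ for $t\le0$. Thus at $t=0$ every difference quotient of $t\mapsto E(\omega+t a_L,L,n)$ is bounded below by $\delta$, so at any $\omega$ at which the partial derivatives $\partial_{\omega_k}E(\omega,L,n)$ exist — which, by local Lipschitz continuity of the eigenvalues in the finitely many relevant coordinates together with Rademacher's theorem, is the case for a.e.\ $\omega$ — the directional derivative $\sum_{k}a_L(k)\,\partial_{\omega_k}E(\omega,L,n)$ coincides with this one-sided limit and is $\ge\delta$. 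This is exactly \eqref{eq:positive}, and an application of Lemma~\ref{thm:abstract} concludes the proof.

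The only point that needs care is eigenvalue perturbation theory at degeneracies: the naive Hellmann--Feynman identity $\partial_{\omega_k}E(\omega,L,n)=\langle\psi_n,(\partial_{\omega_k}H_{\omega,L})\psi_n\rangle$, which would immediately give $\sum_k a_L(k)\,\partial_{\omega_k}E(\omega,L,n)=\langle\psi_n,M\psi_n\rangle\ge\delta\|\psi_n\|^2=\delta$, presupposes the choice of an eigenvector adapted to the perturbation within a degenerate eigenspace and thus requires extra bookkeeping at eigenvalue crossings. The min--max/monotonicity route sketched above is the clean way around this: it never selects eigenvectors, needs no smoothness of the $E(\cdot,L,n)$, and is insensitive to multiplicity crossings. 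All remaining steps — absolute convergence of the sums, identification of $M$, and invoking Lemma~\ref{thm:abstract} — are routine.
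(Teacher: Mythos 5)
Your proof is correct, and it reaches the same conclusion as the paper via a genuinely different mechanism. The paper's own proof is a two-line application of the Hellmann--Feynman formula: it writes $\partial_{\omega_k}E(\omega,L,n)=\langle\psi_n,u(\cdot-k)\psi_n\rangle$ with $\psi_n$ a normalised eigenvector of $H_{\omega,L}$, and sums against $a_L(k)$ to get $\sum_k a_L(k)\,\partial_{\omega_k}E(\omega,L,n)=\langle\psi_n,\sum_k a_L(k)u(\cdot-k)\psi_n\rangle\ge\delta$, which is hypothesis \eqref{eq:positive} of Lemma~\ref{thm:abstract}. You instead identify the operator $M=\sum_k a_L(k)\,\partial_{\omega_k}H_{\omega,L}$ as multiplication by $m(x)=\sum_k a_L(k)u(x-k)\ge\delta$ on $\ell^2(\Lambda_L)$, observe $H_{\omega+ta_L,L}=H_{\omega,L}+tM$, and use min--max monotonicity to bound the one-sided difference quotients of $t\mapsto E(\omega+ta_L,L,n)$ from below by $\delta$, which gives \eqref{eq:positive} wherever the derivative exists. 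What your route buys is robustness at eigenvalue crossings: the Hellmann--Feynman identity for the ordered eigenvalue $E(\omega,L,n)$ requires, at a degeneracy, choosing an eigenvector adapted to the perturbation direction, and the per-coordinate adapted eigenvectors $\psi_n$ appearing in $\partial_{\omega_k}E$ need not be the same for different $k$ --- a wrinkle the paper silently smooths over (it is harmless, because the degeneracy set has measure zero and, on any analytic branch, $M\ge\delta$ still yields the bound, but your argument makes the point moot). What the paper's route buys is brevity, and it stays within the same analytic-perturbation framework (citing Kato) already used to justify Lemma~\ref{lem:partial-integration}. One small remark on your writeup: you do not actually need Rademacher --- the $E(\omega,L,n)$ are (piecewise) analytic in each $\omega_k$ separately by Rellich's theorem, which is the regularity the paper already invokes and which suffices for the partial integration in Lemma~\ref{lem:partial-integration}; but the a.e.\ differentiability you invoke is certainly enough for the conclusion, since \eqref{eq:positive} is only used inside an integral over $\omega_k$.
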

\begin{proof}
By first order perturbation theory, respectively the Hellmann-Feynman formula we have
\[
\frac{\partial }{\partial \omega_k } E(\omega,L,n)
=
\langle \psi_n, u(\cdot-k)\psi_n\rangle
\]
where $\psi_n$ is the normalised eigensolution to $H_{\omega,L} \psi_n =E(\omega,L,n) \psi_n$.
Thus
\[
\sum_{k \in \ZZ^d} a_L(k) \frac{\partial }{\partial \omega_k } E(\omega,L,n)
=\sum_{k \in \ZZ^d} a_L(k) \langle \psi_n, u(\cdot-k)\psi_n\rangle \ge \delta
\]
\end{proof}

The proof of Lemma \ref{thm:abstract}  relies on quite standard techniques, see e.g.~\cite{Wegner-81,Kirsch-96,HislopK-02,LenzPPV-08}.
The main point of the Lemma is that it singles out
a relation between properties of linear combinations of single site potentials and a Wegner estimate.
In the course of the proof we will need the following estimate, which is related to the spectral shift function.
Recall that $ n \mapsto E(\omega, L,n)$ is an enumeration of the eigenvalues of $H_{\omega,L}$.
\begin{lem} \label{lem:partial-integration}
Let $ f\colon \RR\to \RR$ be a function in $BV \cap L^1(\RR)$, $\rho \in C^\infty(\RR)$,
$k \in \ZZ^d$ and $s \in \RR$.
Then
\begin{equation*}
\sum_{n\in \NN} \int d\omega_k  f (\omega_k )  \frac{\partial }{\partial \omega_k } \rho(E(\omega,L,n)  +s)
\le \|f\|_{BV} \, \rank (\chi_\Lambda u(\cdot -k)  )\, \int |\rho'(x)|dx
\end{equation*}
\end{lem}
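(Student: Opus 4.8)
The plan is to reduce everything to a one-dimensional statement about the eigenvalue functions $\omega_k \mapsto E(\omega,L,n)$ with the other coordinates frozen, and then exploit two structural facts: first, that only finitely many eigenvalues actually depend on $\omega_k$, and second, that as $\omega_k$ runs over $\RR$ the map $\omega_k \mapsto E(\omega,L,n)$ is monotone (since $\partial_{\omega_k} E = \langle \psi_n, u(\cdot-k)\psi_n\rangle$ need not have a fixed sign this is not literally true for a single $n$, so the real input is an interlacing/rank bound rather than monotonicity — see below). Concretely, write $r_k := \rank(\chi_\Lambda u(\cdot-k))$; the operator $H_{\omega,L}$ depends on $\omega_k$ through a rank-$r_k$ perturbation, namely the multiplication by $\omega_k\,\chi_\Lambda u(\cdot-k)$. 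Hence by eigenvalue interlacing (min–max), for a fixed value of the other coordinates and any two values $\omega_k = a < b$, each eigenvalue $E(\omega,L,n)$ viewed as a function of $\omega_k$ is \emph{monotone between consecutive "crossing" levels}, and more usefully: for any interval $J\subset\RR$, the number of pairs $(n,\omega_k)$ with the graph of $E(\cdot,L,n)$ crossing a given level is controlled by $r_k$. This is exactly the spectral-shift-function bound $\|\xi\|_\infty \le r_k$ for rank-$r_k$ perturbations.

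First I would fix all coordinates $\omega_j$, $j\ne k$, and define $g_n(t) := E(\omega,L,n)\big|_{\omega_k = t}$. The key monotonicity-type fact is that $\sum_n \mathbf 1[g_n(t) \le \lambda]$ changes by at most $r_k$ as $t$ increases through any value, uniformly in $\lambda$ — equivalently the spectral shift function of the pair $(H_{\omega,L}|_{\omega_k=t_1}, H_{\omega,L}|_{\omega_k=t_2})$ is bounded in sup-norm by $r_k$. Granting this, I would compute the inner sum over $n$ of $\partial_t \rho(g_n(t)+s) = \rho'(g_n(t)+s)\,g_n'(t)$. Summing over $n$ and using the layer-cake / co-area idea, $\sum_n \int dt\, f(t)\, \rho'(g_n(t)+s)\, g_n'(t)$ can be rewritten, after integrating the $f$ against the derivative (integration by parts, legitimate since $f\in BV\cap L^1$ so $f$ has a well-defined distributional derivative of finite total mass and vanishes at $\pm\infty$), as an integral of $\rho'$ against a measure whose total variation is at most $\|f\|_{BV}\cdot r_k$. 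The cleanest route: substitute $\lambda = g_n(t)+s$ in each term, turning $\int f(t)\,\rho'(g_n(t)+s)\,g_n'(t)\,dt$ into $\int \rho'(\lambda)\,\big(\sum_{t: g_n(t)+s=\lambda} \pm f(t)\big)\,d\lambda$, sum over $n$, and bound the resulting weight function of $\lambda$ in sup-norm by $\|f\|_{BV}\, r_k$ using the interlacing count together with $|f|\le \tfrac12\|f\|_{BV}$ (or $\|f\|_\infty\le\|f\|_{BV}$ for the normalisation in use) and the total-variation bookkeeping that handles the $\pm$ signs coming from ascending versus descending branches of $g_n$. Then pull out $\sup|\text{weight}| \le \|f\|_{BV}\, r_k$ and what remains is $\int |\rho'(\lambda)|\,d\lambda$, which is the claimed bound.

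I expect the main obstacle to be making the "substitute $\lambda=g_n(t)+s$ and sum over $n$" step rigorous when the $g_n$ are merely Lipschitz and possibly non-monotone, and $f$ is only $BV$ (so its a.e.-derivative plus a singular part must be carried along). The right way to sidestep pointwise pathologies is to first prove the estimate for $f\in C^1_c$ by a genuine integration by parts — $\sum_n\int f(t)\,\partial_t\rho(g_n(t)+s)\,dt = -\sum_n\int f'(t)\,\rho(g_n(t)+s)\,dt$, then write $\rho(g_n(t)+s)=\rho(-\infty)+\int \mathbf 1[g_n(t)+s\ge\lambda]\,\rho'(\lambda)\,d\lambda$ (for a suitable convention on $\rho$ at $-\infty$, or just work with $\rho$ of bounded variation so this telescopes), interchange sums and integrals, and observe that $\big|\sum_n f'(t)\,\mathbf 1[g_n(t)+s\ge\lambda]\big|$ integrated in $t$ is bounded by $\mathrm{Var}(f)\cdot\sup_t\#\{n: g_n(t)+s\ge\lambda\text{ changes}\}\le \|f\|_{BV}\, r_k$ via the spectral-shift bound. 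The passage from $C^1_c$ to general $f\in BV\cap L^1$ is then a routine approximation argument (mollify $f$; $\|f_\varepsilon\|_{BV}\le\|f\|_{BV}$, $f_\varepsilon\to f$ in $L^1$, and the left-hand side is continuous under this convergence because the derivatives $\partial_{\omega_k}\rho(E+s)$ are uniformly bounded with support controlled). The one genuinely model-specific input that I would isolate and prove (or cite, as it is the standard finite-rank spectral shift estimate) is: perturbing $H_{\omega,L}$ by $t\,\chi_\Lambda u(\cdot-k)$ changes the eigenvalue counting function by at most $\rank(\chi_\Lambda u(\cdot-k))$, uniformly in the energy — this is where the factor $\rank(\chi_\Lambda u(\cdot-k))$ in the statement comes from, and it is the only place the finite-rank nature of the single-site perturbation is used.
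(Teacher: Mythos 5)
Your proposal is correct and follows essentially the same route as the paper: integrate by parts against $f$ (yielding $\|f\|_{BV}$ times the sup-norm of a trace difference), then apply the spectral-shift-function bound $\|\xi\|_\infty\le\rank(\chi_\Lambda u(\cdot-k))$ for the finite-rank perturbation. The paper streamlines the normalisation issue you flag by simply subtracting the constant-in-$\omega_k$ term $\rho\bigl(E(\omega,\omega_k=0,L,n)+s\bigr)$ inside the sum before integrating by parts, so that the inner supremum is identified directly as $\bigl|\int\rho'\,\xi\bigr|$ and neither the $C^1_c$ approximation nor the counting-function reconstruction of the SSF bound is needed.
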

Note that if $ k \not \in \Lambda_{L}^+$ then
$\frac{\partial }{\partial \omega_k } E(\omega,L,n)=0$.
Also note that the sum over $n$ is in fact finite since $H_{\omega,L}$
is defined on a finite dimensional vector space.

\begin{proof}
We will use that if $ g \in C^\infty$ and  $f \in BV \cap L^1$ the partial integration bound
\[
\int f(x) g'(x) dx \le \|g\|_\infty \|f\|_{BV}
\]
holds.
Denote by  $E(\omega, \omega_k=0,L,n)$ the $n$-th eigenvalue of the operator
$H_{\omega,\omega_k=0,L}:=H_{\omega,L} -\omega_k u(\cdot-k) $ on $\ell^2(\Lambda_L)$. Partial integration yields
\begin{align*}
&\sum_{n\in \NN} \int d\omega_k  f (\omega_k )  \frac{\partial }{\partial \omega_k } \rho(E(\omega,L,n)  +s)
 \\
&=
\int d\omega_k  f (\omega_k )  \frac{\partial }{\partial \omega_k } \sum_{n\in \NN}  \Big(\rho(E(\omega,L,n)  +s)    -\rho(E(\omega, \omega_k=0,L,n)  +s)     \Big)
 \\
&\le \|f\|_{BV}
\sup_{\omega_k \in \supp f}\Big|\sum_{n\in \NN} (\rho(E(\omega,L,n)  +s)    -\rho(E(\omega, \omega_k=0,L,n)  +s) )    \Big|
\end{align*}
Here we used that $\omega_k\mapsto E(\omega,L,n)$ is an infinitely differentiable function
cf. \cite{Kato-66}.
Now
\begin{align*}
\sum_{n\in \NN} \rho(E(\omega,L,n)  +s)    -\rho(E(\omega, \omega_k=0,L,n)  +s)
\\
=\Tr \Big( \rho((H_{\omega,L}  +s)    -\rho((H_{\omega,\omega_k=0,L}   +s) \Big)
\end{align*}
can be expressed in terms of the spectral shift function
$\xi(\cdot, H_{\omega,L}, H_{\omega,\omega_k=0,L})$ of the operator pair
$H_{\omega,L}, H_{\omega,\omega_k=0,L}$ as
\begin{align*}
\int \rho'(x) \xi(x, H_{\omega,L}   , H_{\omega,\omega_k=0,L}) dx.
\end{align*}
Since $\|\xi\|_\infty $ is bounded by the rank of the perturbation $\chi_\Lambda u(\cdot -k) $, we obtain
\begin{align*}
\sum_{n\in \NN} \rho(E(\omega,L,n)  +s)    -\rho(E(\omega, \omega_k=0,L,n)  +s)
\le \rank (\chi_\Lambda u(\cdot -k)  ) \, \int |\rho'|
\end{align*}
and the proof of the Lemma is completed.
\end{proof}

Now we turn to the proof of Lemma \ref{thm:abstract}.

\begin{proof}[Proof of Lemma \ref{thm:abstract}]
Let  $\rho\in C^\infty(\RR)$ be a non-decreasing function such that on $(-\infty, -\epsilon]$ it is identically equal to $-1$,
on $[\epsilon, \infty)$ it is identically equal to zero and $\|\rho'\|_\infty \le 1/\epsilon$.
By the chain rule we have
\begin{multline*}
 \sum_{k \in \ZZ^d} a_L(k)\frac{\partial }{\partial \omega_k } \rho(E(\omega,L,n)  -E+t)
\\
=
\rho'(E(\omega,L,n)  -E+t)  \sum_{k \in \ZZ^d} a_L(k)\frac{\partial }{\partial \omega_k } E(\omega,L,n)
\end{multline*}
The assumption  \eqref{eq:positive} implies now
\begin{equation*}
\rho'(E(\omega,L,n)  -E+t)
\le
\frac{1}{\delta}
 \sum_{k \in \ZZ^d} a_L(k)\frac{\partial }{\partial \omega_k } \rho(E(\omega,L,n)  -E+t)
\end{equation*}
Since $\chi_I \le \int_{-2\epsilon}^{2\epsilon} dt \, \rho'(x-E+t)$
for $I :=[E-\epsilon, E + \epsilon]$
we have
\begin{equation*}
\Tr \chi_I(H_{\omega,L})
\le
\frac{1}{\delta}
\int_{-2\epsilon}^{2\epsilon} dt \sum_{n\in \NN}
 \sum_{k \in \ZZ^d} a_L(k)\frac{\partial }{\partial \omega_k } \rho(E(\omega,L,n)  -E+t)
\end{equation*}
Note that for a random variable $F\colon\Omega \to \RR$ we have
$\EE(F) = \EE (\int f(\omega_k) d \omega_k F(\omega))$
Thus using Lemma \ref{lem:partial-integration}  and
$\int |\rho'(x) |dx=1$  we obtain
\begin{equation*}
\EE(\Tr \chi_I(H_{\omega,L})  )
\le
\frac{4\epsilon}{\delta}
 \sum_{k \in \ZZ^d} |a_L(k)|\, \|f\|_{BV} \ \rank (u\cdot \chi_\Lambda)
\end{equation*}
\end{proof}

Now we are in the position to give a
\begin{proof}[Proof of Theorem \ref{thm:non-degenerate}]
Let  $\psi_n$ be a normalised eigenfunction associated to $E(\omega,L,n)$ and
$Q(L,m)= \bigcup_{k \in \Lambda_L} \Big( k + [-m,m]^d \cap \ZZ^d\Big)$.
W.l.o.g.{} we may assume $\bar u >0$.
Then $\sum_{k \in Q(L,m)} u(k)\ge \bar u/2$.
Choose now the coefficients in Corollary \ref{cor:abstract}  in the following way:
$a_L(k) = 1$ for $ k \in Q(L,m)$ and  $a_L(k) = 0$ for $ k $ in the complement of $Q(L,m)$.
Then
\[
 \sum_{k \in \ZZ^d} a_L(k) \langle \psi_n, u(\cdot-k)\psi_n\rangle
= \langle \psi_n, \sum_{k \in Q(L,m)}  u(\cdot-k)\psi_n\rangle
\ge \bar u / 2.
\]
\end{proof}

\begin{proof}[Proof of Corollary \ref{cor:dos}]
Set $a_L(k)= 1$ for $ k \in \Lambda_{L+n}$ and  $a_L(k) = 0$ for $ k $ in the complement of $\Lambda_{L+n}$. Then
\[
 \sum_{k \in \ZZ^d} a_L(k) \langle \psi_n, u(\cdot-k)\psi_n\rangle
= \langle \psi_n, \sum_{k \in \Lambda_{L+n}}  u(\cdot-k)\psi_n\rangle
= \bar u
\]
An application of Corollary \ref{cor:abstract}  now completes the proof.
\end{proof}

\section{Proof of Theorem \ref{thm:degenerate}}
\label{s:transformation-degenerate}

In this section we give a proof of Theorem \ref{thm:degenerate}.
In view of Theorem \ref{thm:non-degenerate} it is sufficient to consider the case that
the single site potential $ u \colon \ZZ^d \to \RR, u \in \ell^1(\ZZ^d)$ is
degenerate in the sense that $\sum_{x\in \ZZ^d} u(x)=0$.
We explain how to find in this situation an appropriate linear combination of single site potentials --- or, equivalently,
an appropriate linear transformation of the random variables --- which can be efficiently used for averaging. The aim of the
linear transformation is to extract a perturbation potential which is strictly positive on the box $\Lambda$.

Let us first consider the case $d=1$.  Then we can assume without loss of generality that $\supp u \subset \{-n, \dots 0\}$. For a given cube $\Lambda_L = \{ 0, \dots, L\}$
we are looking for an array of numbers $a_k, k \in \Lambda_{L+n}$ such that
 we have
\begin{equation}
\label{e:linear-combination}
 \sum_{k \in \Lambda_{L+n}}  a_k u(x-k) = constant  \, >0 \quad \text{ for all $x \in \Lambda_L$}
\end{equation}
In fact, we will find a sequence of numbers $a_k, k \in \NN$ such that  we have
\begin{equation}
\label{e:recursion}
 \sum_{k \in \NN}  a_k u(x-k) = constant  \, >0 \quad \text{ for all $x \in \NN$}
\end{equation}
If we truncate this sequence, we obtain an array of numbers satisfying \eqref{e:linear-combination}.

For a function $F\colon (1-\epsilon, 1+\epsilon) \to \RR$  with $ \epsilon>0$
we say that it has a root of order $m\in \{ 0,\dots,n\}$ at $t=1$ iff it is in $C^m(1-\epsilon, 1+\epsilon)$ and
\begin{align}
\label{e:homo-solution}
\Big( \frac{d^j}{dt^j} F(t) \Big)\Big|_{t=1}&= 0 \quad \text{ for } j =0, \dots,m-1 \\
\label{e:inhomo-solution}
c(F):= \Big( \frac{d^m}{dt^m} F(t) \Big)\Big|_{t=1}&\neq 0
\end{align}
In particular, $m=0$ means that $F(1)\neq0$. If $F$ is a polynomial of degree not exceeding $m$,
if  \eqref{e:homo-solution}
holds and in addition $c(F)=0$, then $F\equiv 0$.
In this case we say that $F$ has a root of infinite order at $t=1$.

Given a function $w \colon \ZZ \to \RR$ such that $F_w(t) := \sum_{\nu\in \ZZ} t^\nu w(-\nu)$
converges for $t \in (1-\epsilon, 1+\epsilon)$
we call $(1-\epsilon, 1+\epsilon)\ni t \mapsto F(t) := F_w(t)$ the accompanying (Laurent) series of $w$.
If $\supp w \subset \{-n,\dots,0\}$
we call $t \mapsto p(t) := p_w(t) :=\sum_{\nu=0}^n t^\nu w(-\nu)$ the accompanying polynomial of $w$.

%

\begin{lem} \label{l:infinite-recursion-solution}
Let $ D \in \NN_0$ and $ a_k = k^D$ for all $ k \in \NN$. Let $ m$ be the order of the
root $t=1$ of the Laurent series $F$ accompanying the function $w\colon \ZZ \to \RR$
with convergent series $\sum_{\nu\in \ZZ} t^\nu w(-\nu)$ for
$t \in  (1-\epsilon, 1+\epsilon)$.
\begin{enumerate}[(a)]
 \item If $ m>D$ then $ \sum_{k \in \ZZ} a_k w(x-k)=0$ for all $x \in \NN$.
 \item If $ m= D$ then $ \sum_{k\in \ZZ} a_k w(x-k)=c(F)$ for all $x \in \NN$.
\end{enumerate}
\end{lem}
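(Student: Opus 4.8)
The plan is to reduce the identity $\sum_{k\in\ZZ} a_k w(x-k) = \sum_{k=1}^\infty k^D w(x-k)$ (which is a finite sum for each fixed $x\in\NN$, since $w$ is supported in $\{-n,\dots,0\}$ so only $k=x,\dots,x+n$ contribute) to a statement about the Laurent series $F(t)=\sum_{\nu\in\ZZ}t^\nu w(-\nu)$ near $t=1$. The key observation is the generating-function identity: for fixed $x\in\NN$,
\[
\sum_{k\in\ZZ} a_k\, w(x-k)\, t^{x-k} \quad\text{relates to}\quad \Big(\text{a differential operator in }t\Big)\big[t^{(\cdot)}\big]\cdot F(t),
\]
more precisely, writing $\nu = k-x$, we have $w(x-k)=w(-\nu)$ and $k = x+\nu$, so $k^D = (x+\nu)^D = \sum_{j=0}^D \binom{D}{j} x^{D-j}\nu^j$. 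Hence
\[
\sum_{k\in\ZZ} k^D w(x-k) = \sum_{\nu\in\ZZ}(x+\nu)^D w(-\nu) = \sum_{j=0}^D \binom{D}{j} x^{D-j}\,\Big(\sum_{\nu\in\ZZ}\nu^j w(-\nu)\Big).
\]
So everything comes down to the moments $M_j := \sum_{\nu\in\ZZ}\nu^j w(-\nu)$ for $j=0,\dots,D$, and these are exactly encoded in the derivatives of $F$ at $t=1$: applying the operator $(t\,d/dt)^j$ to $F(t)=\sum_\nu t^\nu w(-\nu)$ and evaluating at $t=1$ gives $\sum_\nu \nu^j w(-\nu) = M_j$. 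Since $t=1$ is a root of order $m$ of $F$, meaning $F^{(i)}(1)=0$ for $i<m$, a standard change of basis between $\{(t\,d/dt)^j\}$ and $\{d^i/dt^i\}$ (valid near $t=1$) shows $M_j = 0$ for all $j < m$, while $M_m = F^{(m)}(1) = c(F)$.

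From here the two cases are immediate. If $m > D$, then $M_j = 0$ for all $j = 0,\dots,D$, so the double sum above vanishes for every $x\in\NN$, giving part~(a). If $m = D$, then $M_j = 0$ for $j=0,\dots,D-1$ and $M_D = c(F)$; only the $j=D$ term survives in the expansion, with coefficient $\binom{D}{D} x^{0} = 1$, yielding $\sum_k a_k w(x-k) = c(F)$ for all $x\in\NN$, which is part~(b).

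The step I expect to require the most care is justifying the passage between the differential operators $(t\,d/dt)^j$ and ordinary derivatives $d^i/dt^i$ at $t=1$ — i.e. verifying that the order-$m$ vanishing in the sense of \eqref{e:homo-solution} transfers to vanishing of the moments $M_0,\dots,M_{m-1}$ and that $M_m$ picks out $c(F)$ with coefficient exactly $1$. This is a triangular linear algebra fact ($(t\,d/dt)^j = \sum_{i\le j} S(j,i)\, t^i (d/dt)^i$ with Stirling numbers, and the leading coefficient is $1$), but one must also check that termwise differentiation of $F$ is legitimate, which follows from the assumed convergence of $\sum_\nu t^\nu w(-\nu)$ on the open interval $(1-\epsilon,1+\epsilon)$ together with the fact that for $w$ supported in $\{-n,\dots,0\}$ the series $F$ is in fact a genuine polynomial $p_w$, so no analytic subtlety arises at all. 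The remaining bookkeeping — reindexing $\nu = k-x$ and expanding $(x+\nu)^D$ by the binomial theorem — is routine.
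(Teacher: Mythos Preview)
Your argument is correct and follows essentially the same route as the paper's proof. Both reduce to the identity $\sum_{k\in\ZZ} k^D w(x-k)=\sum_{j=0}^D\binom{D}{j}x^{D-j}M_j$ with $M_j=(t\,d/dt)^jF(t)\big|_{t=1}$, and both use the triangular relation between $(t\,d/dt)^j$ and ordinary derivatives $d^i/dt^i$ with unit leading coefficient. The paper packages this via the substitution $t=e^s$ (so that $d/ds=t\,d/dt$), writing $k^D=\frac{d^D}{ds^D}e^{ks}\big|_{s=0}$, pulling the derivative outside to obtain $\frac{d^D}{ds^D}\big(e^{xs}F(e^s)\big)\big|_{s=0}$, and then applying Leibniz; your Stirling-number remark is exactly its observation that $\frac{d^\nu}{ds^\nu}F(e^s)=\sum_{\kappa\le\nu}c_\kappa F^{(\kappa)}(e^s)e^{\kappa s}$ with $c_\nu=1$. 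The computational content is identical.

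One correction: the lemma as stated does \emph{not} assume $w$ has compact support --- that hypothesis appears only in the subsequent corollary. Your parenthetical remarks that the sum is finite (``only $k=x,\dots,x+n$ contribute'') and that $F$ is ``in fact a genuine polynomial $p_w$'' therefore do not apply here. This does not damage your argument, since termwise differentiation of the Laurent series and absolute convergence of the moments $M_j$ already follow from the assumed convergence of $\sum_\nu t^\nu w(-\nu)$ on the open interval $(1-\epsilon,1+\epsilon)$; but those remarks should be removed in a final write-up.
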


An important and well known special case is

\begin{cor}
\label{l:recursion-solution}
Let $ D \in \NN_0$ and $ a_k = k^D$ for all $ k \in \NN$. Let $ m$ be the order of the
root $t=1$ of the polynomial $p$ accompanying the function $w\colon \ZZ \to \RR$ with
$\supp w \subset \{-n, \dots, 0 \}$.
\begin{enumerate}[(a)]
 \item If $ m>D$ then $ \sum_{k =x}^{x+n} a_k w(x-k)=0$ for all $x \in \NN$.
 \item If $ m= D$ then $ \sum_{k =x}^{x+n} a_k w(x-k)=c(p)$ for all $x \in \NN$.
\end{enumerate}
\end{cor}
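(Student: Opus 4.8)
The plan is to derive Corollary~\ref{l:recursion-solution} directly from Lemma~\ref{l:infinite-recursion-solution} by observing that the hypotheses of the Corollary are a special case of those of the Lemma. First I would note that if $w\colon \ZZ \to \RR$ satisfies $\supp w \subset \{-n,\dots,0\}$, then the accompanying Laurent series $F_w(t) = \sum_{\nu\in\ZZ} t^\nu w(-\nu)$ is a finite sum, namely $\sum_{\nu=0}^n t^\nu w(-\nu) = p_w(t)$, the accompanying polynomial. In particular $F_w$ converges for all $t\in\RR$, so the convergence hypothesis of Lemma~\ref{l:infinite-recursion-solution} on some interval $(1-\epsilon,1+\epsilon)$ is automatic, and $F_w = p_w$ as functions near $t=1$. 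Hence the order $m$ of the root $t=1$ of $p$ coincides with the order of the root $t=1$ of the Laurent series $F$, and $c(p) = c(F)$.

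Next I would apply Lemma~\ref{l:infinite-recursion-solution} verbatim with this $w$: in case (a), $m > D$ gives $\sum_{k\in\ZZ} a_k w(x-k) = 0$ for all $x\in\NN$, and in case (b), $m = D$ gives $\sum_{k\in\ZZ} a_k w(x-k) = c(F) = c(p)$ for all $x\in\NN$. The only remaining point is to rewrite the sum over $k\in\ZZ$ as the finite sum $\sum_{k=x}^{x+n}$ claimed in the Corollary. This is just bookkeeping on the support: the summand $a_k w(x-k)$ is nonzero only when $x-k \in \supp w \subset \{-n,\dots,0\}$, i.e. when $k \in \{x,\dots,x+n\}$. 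Since $x\in\NN$ we have $x\ge 1$, so all these indices $k$ lie in $\NN$, where $a_k = k^D$ is defined, and no terms are lost by restricting the range. Substituting this truncated range into the two conclusions of the Lemma yields exactly the two assertions of the Corollary.

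There is essentially no obstacle here: the Corollary is labelled an ``important and well known special case,'' and the work is entirely in recognizing that a compactly supported $w$ has a polynomial (rather than genuine Laurent) accompanying series, so that the notions of ``order of the root at $t=1$'' for $p$ and for $F$ agree. If anything, the one point to state carefully is that a polynomial of degree $\le n$ having a root of order exceeding its degree at $t=1$ must vanish identically — but this is already recorded in the text preceding the Lemma, and it only matters for the degenerate case where $m$ would be ``infinite''; in the stated cases (a) and (b) with finite $m\le n$ it plays no role. I would therefore present the proof in two or three sentences, invoking Lemma~\ref{l:infinite-recursion-solution} and the support bookkeeping.

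\begin{proof}
Since $\supp w \subset \{-n,\dots,0\}$, the accompanying Laurent series $F = F_w$ is the finite sum $\sum_{\nu=0}^n t^\nu w(-\nu) = p_w(t) = p(t)$, which converges for all $t\in\RR$; in particular the convergence hypothesis of Lemma~\ref{l:infinite-recursion-solution} holds and the order $m$ of the root $t=1$ of $F$ equals that of $p$, with $c(F) = c(p)$. Applying Lemma~\ref{l:infinite-recursion-solution} with $a_k = k^D$, we obtain in case (a) that $\sum_{k\in\ZZ} a_k w(x-k) = 0$ for all $x\in\NN$, and in case (b) that $\sum_{k\in\ZZ} a_k w(x-k) = c(F) = c(p)$ for all $x\in\NN$. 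Finally, for fixed $x\in\NN$ the term $a_k w(x-k)$ vanishes unless $x-k\in\{-n,\dots,0\}$, i.e. unless $k\in\{x,\dots,x+n\}$; all such $k$ lie in $\NN$, so $\sum_{k\in\ZZ} a_k w(x-k) = \sum_{k=x}^{x+n} a_k w(x-k)$, and the two assertions follow.
\end{proof}
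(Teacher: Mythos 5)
Your proposal is correct and follows exactly the same route as the paper: specialize Lemma~\ref{l:infinite-recursion-solution} to the compactly supported case (where $F_w=p_w$, so the orders and the constants $c(F)=c(p)$ agree) and then truncate the sum using $\supp w\subset\{-n,\dots,0\}$, which is precisely the one-line remark the paper appends after the Corollary. You merely spell out the step $F_w=p_w$ that the paper leaves implicit; there is no substantive difference.
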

\noindent
Due to the support condition
$ \sum_{k \in \NN}a_k w(x-k)=\sum_{k =x}^{x+n} a_k w(x-k)$ for all $x \in \NN$.

\begin{proof}[Proof of Lemma \ref{l:infinite-recursion-solution}]
 First note that for arbitrary $\nu \in \NN$ and $ s\in \RR $ we have
\[
 \frac{d^\nu}{ds^\nu} F(e^s) = \sum_{\kappa =1}^\nu c_\kappa \, F^{(\kappa)} (e^s) \, e^{\kappa s}
\]
with some $c_1, \dots, c_{\nu-1} \in \NN_0$ and $c_\nu=1$.
For the value $s=0$ it follows from \eqref{e:homo-solution}
that $\frac{d^\nu}{ds^\nu} F(e^s) = 0$ for $\nu =0, \dots,m-1$ and
from \eqref{e:inhomo-solution} that
$\frac{d^m}{ds^m} F(e^s)= F^{(m)}(e^s) \, e^{m s}= c(F)$.

We note that $a_k = \frac{d^D}{ds^D} e^{k s}$ for $s=0$ and insert this into the LHS of \eqref{e:recursion} to obtain
\begin{align}\nonumber
\sum_{k \in \ZZ} a_k w(x-k)
&=
\sum_{k \in \ZZ} w(x-k) \frac{d^D}{ds^D} e^{k s}
=
\sum_{\nu \in \ZZ} w(-\nu) \frac{d^D}{ds^D} e^{(\nu+x)s}
\\
\label{e:infinite-insert}
&=
\frac{d^D}{ds^D} \big(e^{xs} \, F(e^s)\big)
=
\sum_{r=0}^D  \genfrac{(}{)}{0pt}{}{D}{r} \Big( \frac{d^r}{ds^r} F(e^s) \Big)
\Big( \frac{d^{D-r}}{ds^{D-r}} e^{xs} \Big) \, .
\end{align}
For $s=0$, \eqref{e:infinite-insert} vanishes if $D<m$ and equals $c(F)$ if $D=m$.
\end{proof}

Thus we have found in the case $d=1$ and $w=u$ a linear combination with the desired property \eqref{e:recursion}.
In the multidimensional situation we will reduce the dimension one by one
and construct from a non-vanishing single site potential in dimension $j$
a non-vanishing one in dimension $j-1$. In each reduction step we
apply Corollary \ref{l:recursion-solution}.

Let $\wj\colon \ZZ^j \to \RR$ be compactly supported and not identically vanishing.
W.l.o.g. we assume $\supp \wj \subset [-n,0]^j\cap \ZZ^j$.
Next we define a `projected' single site potential  $\wjj \colon \ZZ^{j-1} \to \RR$ as follows.
Consider the family of polynomials $p(x_1, \dots, x_{j-1}, \cdot) \colon \RR \to \RR$,
indexed by $(x_1, \dots, x_{j-1}) \in \{-n,\dots, 0\}^{j-1} $ and defined by
\begin{equation}
\label{e-polynomial}
p(x_1, \dots, x_{j-1}, t) := \sum_{\nu=0}^n t^\nu \, \wj(x_1, \dots, x_{j-1}, -\nu) \, .
\end{equation}
Let $m(x_1, \dots, x_{j-1})\in \{ 0,\dots,n, \infty\}$ be the order of the root $t=1$ of the polynomial $p(x_1, \dots, x_{j-1}, \cdot)$ and $M:=M_{j} := \min \big\{ m(x_1, \dots, x_{j-1}) \mid x_1, \dots, x_{j-1} \in \{-n, \dots, 0\}\big\}$ the minimal degree occurring in the family.
Since $\wj$ does not vanish identically, $M_{j} \le n$.
Set 
\begin{align*}
I_{j-1}& := \{ (x_1, \dots, x_{j-1})\in \{-n,\dots, 0\}^{j-1} \mid m(x_1, \dots, x_{j-1})=M_{j}\}  \ \text{ and }
\\
J_{j-1} &:= \{ (x_1, \dots, x_{j-1})\in \{-n,\dots, 0\}^{j-1} \mid m(x_1, \dots, x_{j-1})>M_{j}\}  
\end{align*}
\begin{lem}
For all $(x_1, \dots, x_{j-1})\in \{-n,\dots, 0\}^{j-1}$
we have the equality
\begin{equation}
\label{e:def-wjj}
\sum_{k\in \NN}k^{M} \, \wj(x_1, \dots, x_{j-1},x_j-k) =
\Big (\frac{d^M}{dt^M} p(x_1, \dots, x_{j-1},t) \Big) \Big|_{t=1} \, .
\end{equation}
We denote the function in \eqref{e:def-wjj} by $ \wjj\colon \ZZ^j \to \RR$. Then $ \wjj$ is
independent of the variable $x_j$ and therefore we call it the single site potential in reduced dimension and consider it sometimes as a function $ \wjj\colon \ZZ^{j-1} \to \RR$. Its support is contained in $\{-n,\dots, 0\}^{j-1}$.

Moreover, $\wjj(x_1, \dots, x_{j-1})=0$ if $(x_1, \dots, x_{j-1}) \in J_{j-1}$ and
$\wjj(x_1, \dots, x_{j-1})\neq0$ if $(x_1, \dots, x_{j-1}) \in I_{j-1}$.
\end{lem}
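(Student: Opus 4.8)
The plan is to deduce this lemma by applying Corollary~\ref{l:recursion-solution} separately for each frozen value of $(x_1,\dots,x_{j-1})$, in the single remaining variable $x_j$.

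First I would fix $(x_1,\dots,x_{j-1})\in\{-n,\dots,0\}^{j-1}$ and pass to the one-dimensional slice $w:=\wj(x_1,\dots,x_{j-1},\cdot)\colon\ZZ\to\RR$. By the support hypothesis on $\wj$ one has $\supp w\subset\{-n,\dots,0\}$, and comparing with \eqref{e-polynomial} shows that the polynomial accompanying $w$ is exactly $p(x_1,\dots,x_{j-1},\cdot)$; hence the order of its root at $t=1$ is $m(x_1,\dots,x_{j-1})$. Next I would apply Corollary~\ref{l:recursion-solution} to $w$ with $D:=M=M_j$. If $(x_1,\dots,x_{j-1})\in I_{j-1}$, then $m(x_1,\dots,x_{j-1})=M$, so part~(b) gives $\sum_{k\in\NN}k^M w(x_j-k)=c\big(p(x_1,\dots,x_{j-1},\cdot)\big)=\big(\tfrac{d^M}{dt^M}p(x_1,\dots,x_{j-1},t)\big)\big|_{t=1}$, which is nonzero by \eqref{e:inhomo-solution}. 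If $(x_1,\dots,x_{j-1})\in J_{j-1}$, then $m(x_1,\dots,x_{j-1})>M$ --- this includes the degenerate value $m=\infty$, which occurs exactly when the slice vanishes identically and which always lies in $J_{j-1}$ because $M_j\le n$ --- so part~(a) gives $\sum_{k\in\NN}k^M w(x_j-k)=0$; at the same time $\big(\tfrac{d^M}{dt^M}p(x_1,\dots,x_{j-1},t)\big)\big|_{t=1}=0$ because $M\le m-1$ and the first $m-1$ derivatives of $p(x_1,\dots,x_{j-1},\cdot)$ vanish at $t=1$ by \eqref{e:homo-solution}. In both cases the two sides of \eqref{e:def-wjj} agree.

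This proves the identity \eqref{e:def-wjj} for every $(x_1,\dots,x_{j-1})\in\{-n,\dots,0\}^{j-1}$; for $(x_1,\dots,x_{j-1})$ outside this box the slice $w$ is identically zero, so both sides vanish and \eqref{e:def-wjj} holds on all of $\ZZ^{j-1}$. The remaining assertions are then read off directly: the right-hand side of \eqref{e:def-wjj} does not contain $x_j$, so $\wjj$ is independent of $x_j$ and descends to a function on $\ZZ^{j-1}$; it vanishes once $(x_1,\dots,x_{j-1})$ leaves $\{-n,\dots,0\}^{j-1}$, which is the support statement; and the case distinction above is precisely the claim that $\wjj=0$ on $J_{j-1}$ and $\wjj\neq0$ on $I_{j-1}$.

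I do not expect a genuine obstacle here --- the content is entirely in Corollary~\ref{l:recursion-solution}. The only points needing attention are the bookkeeping of the summation index (the sum in \eqref{e:def-wjj} only ever meets the finite window $k\in\{x_j,\dots,x_j+n\}$, so it is unambiguous and the slicewise identity evaluates it to the $x_j$-free constant) and the verification in the degenerate subcase $m>M$ that \emph{both} sides of \eqref{e:def-wjj} equal zero. If one prefers a self-contained argument avoiding the $x_j\in\NN$ restriction present in the statement of Corollary~\ref{l:recursion-solution}, one can instead reindex directly: the sum becomes $\sum_{\nu=0}^n(x_j+\nu)^M\wj(x_1,\dots,x_{j-1},-\nu)$, a polynomial in $x_j$ of degree at most $M$ whose coefficient of $x_j^{M-r}$ equals $\binom{M}{r}\sum_\nu\nu^r\wj(x_1,\dots,x_{j-1},-\nu)=\binom{M}{r}\big((t\tfrac{d}{dt})^r p(x_1,\dots,x_{j-1},t)\big)\big|_{t=1}$; these vanish for all $r<M$ by the root-order hypothesis, leaving only the constant term $\big(\tfrac{d^M}{dt^M}p(x_1,\dots,x_{j-1},t)\big)\big|_{t=1}$, which reproves \eqref{e:def-wjj} and the $x_j$-independence in one stroke.
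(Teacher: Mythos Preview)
Your proposal is correct and follows essentially the same approach as the paper: fix $(x_1,\dots,x_{j-1})$, view the resulting one-variable slice, and apply Corollary~\ref{l:recursion-solution} with $D=M$, using part~(a) for $J_{j-1}$ and part~(b) for $I_{j-1}$. You are in fact somewhat more thorough than the paper, which does not explicitly check that the right-hand side of \eqref{e:def-wjj} vanishes in the $J_{j-1}$ case nor address points outside $\{-n,\dots,0\}^{j-1}$; your alternative reindexed argument is a nice bonus but not needed.
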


\begin{rem} The lemma establishes in particular that
\begin{itemize}
\item $M$ is an element of $\{0, \dots,n\}$. If we had $M \ge n+1$, then all polynomials $p(x_1, \dots, x_{j-1}, \cdot)$ would vanish identically and thus $\wj \equiv 0$ contrary to our assumption.
\item $\wjj$  does not vanish identically. In fact $\supp \wjj = I_{j-1} \neq \emptyset$ by definition.
 \end{itemize}
\end{rem}

\begin{proof}
Consider first the case $(x_1, \dots, x_{j-1}) \in J_{j-1}$. Then for any $x_j \in \NN$
\[
\wjj (x_1, \dots, x_{j-1}) =\sum_{k \in \NN}k^{M} \, \wj(x_1, \dots, x_{j-1},x_j-k) =0
\]
by Lemma \ref{l:recursion-solution}, part (a), since $t=1$ is a root of order
$M+1$ or higher of the accompanying polynomial $p(x_1, \dots, x_{j-1}, \cdot)$.

Now, if $ (x_1, \dots, x_{j-1}) \in I_{j-1}$  then the order of the root $t=1$ of the polynomial
$p(x_1, \dots, x_{j-1}, \cdot)$ equals $M$. Thus by part (b) of Lemma
\ref{l:recursion-solution}
\begin{align*}
\wjj (x_1, \dots, x_{j-1})
&=\sum_{k \in \NN}k^{M} \, \wj(x_1, \dots, x_{j-1},x_j-k) \\
&=\Big (\frac{d^M}{dt^M} p(x_1, \dots, x_{j-1},t) \Big) \Big|_{t=1}
\end{align*}
for all $ x_j \in \NN$.
\end{proof}

In the last step $j=1 \to j-1 =0$ of the induction we obtain a reduced single site potential
\[
w^{(0)} = \Big (\frac{d^{M_1}}{dt^{M_1}} p(t) \Big) \Big|_{t=1} = c(p)
\]
which is a simply non-zero real.

Now we describe the result which is obtained after the reduction is applied $d$ times.
Given a  single site potential $u\colon\ZZ^d \to \RR$
with $ \supp u \subset [-n,0]^d \cap \ZZ^d$, set $w^{(d)}=u$ and
\begin{align}
w^{(0)}
& = \sum_{k_1 \in \NN } k_1^{M_1} w^{(1)} (x_1-k_1) \\
& = \sum_{k_1 \in \NN } k_1^{M_1} \dots
\sum_{k_d \in \NN } k_d^{M_{d}} w^{(d)} (x_1-k_1, \dots , x_d-k_d)
\end{align}
Thus we have produced a linear combination of single site potentials
\[
\sum_{k \in \Lambda_{L+n} } b_k w^{(d)} (x_1-k_1, \dots , x_d-k_d)
\quad \text{ where }  \quad b_k :=k_1^{M_1} \dots k_d^{M_{d}}
\]
which is a constant, non-vanishing function on the cube $\Lambda_L $.
Moreover, the coefficients satisfy the bound
\[	
|b_k| \le k_1^n \dots k_d^n \le (L+n)^{d\cdot n}
\quad \text{ for all } \quad  k \in \Lambda_{L+n}
\]
Now an application of Corollary \ref{cor:abstract} with the choice
$a_L(k)=b_k$ for $k \in \Lambda_{L+n}$ and
$a_L(k)=0$ for $k$ in the complement of this set
completes the \emph{Proof of Theorem \ref{thm:degenerate}}.

\section{Proof of Theorem \ref{thm:degenerate-exponential}}
\label{s:transformation-degenerate-exponential}

The assumption on the exponential decay of $u$  implies that
$F(z) =\sum_{\nu \in \ZZ} z^\nu u(-\nu)$ is an
absolutely and uniformly convergent Laurent series on the  annulus
$\{z \in \CC \mid r_1 \le |z| \le r_2\}$ for some $0< r_1 < 1 < r_2 < \infty $ and represents
there a holomorphic function. This implies that there exists a $D \in\NN_0$
such that $ c(F) := \frac{\partial^D}{\partial z^D} F(z) \mid _{z=1} \neq 0$.
Otherwise $F$ would be identically vanishing, implying that $u$ vanishes identically.
Thus the root $z=1$  of $F$ has a well defined, finite order $D\in \NN_0$
and Lemma \ref{l:infinite-recursion-solution} can be applied.

The problem is now that the series $\sum_{k \in \ZZ} k^D$ is not absolutely convergent.
For this reason we will replace it with an appropriate finite cut-off sum.
Assume in the following w.l.o.g.{} that $c(F) >0$.
A lengthy but easy calculation shows that for all $ \beta > D / | \log s|$ there exists a constant $K_\beta \in (0,\infty)$ such that for all $L\in \NN$
\[
\forall \  x \in \Lambda_L :
\sum_{k \not \in \{-K/2, \dots, m\}}
|k|^D     |u(x-k)| \le \frac{c(F)}{2}
\]
where $ m= L + \beta \log L + K_\beta/2$. Consequently
\[
\forall \  x \in \Lambda_L :
\sum_{k \in \{-K/2, \dots, m\}}
k^D     u(x-k) \ge \frac{c(F)}{2}
\]
Thus we can apply Corollary \ref{cor:abstract}
with the choice
$a_L(k) = k^D$ for $ k \in \{-k, \dots, m\}$
and $a_L(k) = 0$ for $ k \in \{-k_1, \dots, m+1\}$
and obtain
\[
\EE \left \{\Tr \big [\chi_{[E-\epsilon,E+\epsilon]}(H_{\omega,L})\big]\right\}
\le
\frac{8 \epsilon}{c(F)}  \|f\|_{BV} \ L (L + \beta \log L + K)^{D+1}
\]

%
%
%

\section{Discussion: Localisation for large disorder}

In the case that $H_0= \Delta$ is the finite difference Laplacian and the disorder is sufficiently strong, Theorem 
\ref{thm:degenerate} can be used to prove 
exponential localisation on the whole energy axis $\RR$, i.e.~to show that almost surely $H_\omega, \omega \in \Omega$
has no continuous spectral component and that all eigenfunctions of $H_\omega$ decay exponentially at infinity.
We do not discusslocalisation near spectral edges which is a more delicate issue.
Note that Theorem \ref{thm:degenerate} assumes in particular that the single
site potential $u$ is of compact support. 

The results described below are based on the multiscale analysis, cf.~e.g.~
\cite{FroehlichS-83,DreifusK-89,GerminetK-01a}.
It  is an induction procedure over increasing length scales $L_k, k \in \NN$.
The induction step uses a Wegner estimate to deduce from probabilistic decay estimates 
on the Green's function of the random operator restricted to a box of size $L_k$
corresponding decay estimates on the larger scale $L_{k+1}$.
The induction anchor is provided by the initial scale estimate,		
a probabilistic statement on the decay of the Green's function of the
random operator restricted to a box on first scale $L_0$. 
A very strong form of the initial scale estimate is that for some $p>d , m>0$ 
\begin{equation}
 \label{eq:uniform-initial-scale-estimate}
 \PP\{ \|(H_{\omega,L}-E)^{-1}\| \le \exp (-mL/2)\} \ge 1- L_0^p. 
\end{equation}	
Together with a Wegner estimate as in Theorem \ref{thm:degenerate}, the bound \eqref{eq:uniform-initial-scale-estimate}
yields exponential localisation for $H_\omega, \omega\in  \Omega$ in a small neighbourhood of $E$, provided that $L_0$ 
is larger than a certain  critical lenght scale $L^*$, dependig on the parameters of the model. 

Here we present a simple idea how to derive the initial scale estimate from the 
Wegner estimate in the case of large disorder, which we learned from A.~Klein and which 
has almost the same proof as Theorem 11.1 in \cite{Kirsch-08}
although the statements and the models under consideration are somewhat different.
For an earlier related result see Proposition A.1.2 in \cite{DreifusK-89}.

\begin{lem}
\label{l:large-disorder}
Let the assumptions of Theorem \ref{thm:degenerate} hold. Let $H_0=\Delta$ and $p \in \NN$. 
Choose $L \in \NN$ such that $e^L \ge (c_u \rank u ) L^p (L+n)^{d(n+1)}$. 
If $\|f\|_{BV}^{-1/2}\ge e^L $, then 
\begin{equation*}
\forall \ E \in \RR \colon \ \PP \left \{\| (H_{\omega,L} -E)^{-1} \|  >e^{-L}\right\}       \le L^{-p}
\end{equation*}
\end{lem}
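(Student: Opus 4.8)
The plan is to bound the probability that $(H_{\omega,L}-E)^{-1}$ has large norm by the probability that $H_{\omega,L}$ has an eigenvalue close to $E$, and then to estimate the latter by the Wegner bound of Theorem~\ref{thm:degenerate}. Indeed, $\|(H_{\omega,L}-E)^{-1}\| > e^{-L}$ holds if and only if $\dist(E, \sigma(H_{\omega,L})) < e^{L}$, which in turn implies that $H_{\omega,L}$ has at least one eigenvalue in the interval $I = [E - e^{L}, E + e^{L}]$, i.e.\ $\Tr\chi_I(H_{\omega,L}) \ge 1$. Wait --- that cannot be right: $\dist(E,\sigma) < e^L$ is essentially always true since $e^L$ is huge. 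The inequality must go the other way: $\|(H_{\omega,L}-E)^{-1}\| > e^{-L}$ is equivalent to $\dist(E,\sigma(H_{\omega,L})) < e^{-L}$, which forces $\Tr\chi_I(H_{\omega,L}) \ge 1$ with $I = [E-e^{-L}, E+e^{-L}]$, an interval of half-length $\epsilon = e^{-L}$.

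Once that reduction is in place, the rest is Chebyshev/Markov together with Theorem~\ref{thm:degenerate}. First I would write
\[
\PP\left\{\|(H_{\omega,L}-E)^{-1}\| > e^{-L}\right\} \le \PP\left\{\Tr\chi_I(H_{\omega,L}) \ge 1\right\} \le \EE\left\{\Tr\chi_I(H_{\omega,L})\right\},
\]
using that $\Tr\chi_I \ge 0$ is integer-valued. Then apply Theorem~\ref{thm:degenerate} with $\epsilon = e^{-L}$ to get the upper bound
\[
\EE\left\{\Tr\chi_I(H_{\omega,L})\right\} \le c_u\,\rank u\,\|f\|_{BV}\; e^{-L}\,(L+n)^{d(n+1)}.
\]
Now I would use the two hypotheses on $L$. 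The condition $e^L \ge (c_u\,\rank u)\,L^p\,(L+n)^{d(n+1)}$ rearranges to $c_u\,\rank u\,(L+n)^{d(n+1)} \le e^{L} L^{-p}$, so the bound becomes $\le \|f\|_{BV}\, e^{-L}\, e^{L} L^{-p} = \|f\|_{BV}\, L^{-p}$. Finally the hypothesis $\|f\|_{BV}^{-1/2} \ge e^{L}$ gives $\|f\|_{BV} \le e^{-2L} \le 1$, so in particular $\|f\|_{BV}\, L^{-p} \le L^{-p}$, which is the claimed estimate. (One even has room to spare; the precise form of the $\|f\|_{BV}$ hypothesis is what guarantees $\|f\|_{BV}\le 1$, which is all that is really needed here.)

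The only genuinely delicate point is the first step: the equivalence $\|(H_{\omega,L}-E)^{-1}\| > e^{-L} \iff \dist(E,\sigma(H_{\omega,L})) < e^{-L}$ uses that $H_{\omega,L}$ is selfadjoint and finite-rank, so its resolvent norm at a real point $E$ equals $1/\dist(E,\sigma(H_{\omega,L}))$ (with the convention that the norm is $+\infty$ when $E$ is an eigenvalue, in which case the probability bound is trivially still fine). This is standard spectral theory for selfadjoint operators and was already recorded in the excerpt, where $H_{\omega,L}$ is noted to be a selfadjoint finite rank operator with real eigenvalues. I would also remark that the bound is uniform in $E \in \RR$ precisely because Theorem~\ref{thm:degenerate} is uniform in $E$, which is exactly the feature that makes it usable as an initial-scale input on the whole energy axis in the large-disorder regime.
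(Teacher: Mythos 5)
The crucial error is in your ``correction'' of the resolvent identity, and it is worth pausing on because your first instinct was actually correct. For a self-adjoint operator $H$ on a finite-dimensional space and $E\in\RR$ one has $\|(H-E)^{-1}\| = 1/\dist(E,\sigma(H))$, hence
\[
\|(H_{\omega,L}-E)^{-1}\| > e^{-L}
\quad\Longleftrightarrow\quad
\dist\bigl(E,\sigma(H_{\omega,L})\bigr) < e^{L},
\]
exactly as you wrote in your opening sentence. Your subsequent ``wait --- that cannot be right'' reasons that the event $\{\dist(E,\sigma) < e^L\}$ must be almost sure because $e^L$ is huge; but that intuition ignores the disorder hypothesis $\|f\|_{BV}\le e^{-2L}$, which is precisely the mechanism that defeats it. When $\|f\|_{BV}$ is that small the density $f$ is spread out over a range of order $e^{2L}$, so the eigenvalues of $H_{\omega,L}$ are dispersed over a vastly larger range and it genuinely is unlikely that any of them falls within $e^L$ of a prescribed $E$. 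The replacement equivalence you then propose, $\|(H_{\omega,L}-E)^{-1}\| > e^{-L}\iff \dist(E,\sigma)<e^{-L}$, corresponds to $\|(H-E)^{-1}\| = \dist(E,\sigma)$ rather than its reciprocal, which is false. Consequently your argument bounds $\PP\{\dist(E,\sigma)<e^{-L}\}$, the probability of a much smaller event, and does not establish the lemma.

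The paper's proof keeps $\epsilon = e^L$ and runs as follows: by Markov,
\[
\PP\{\dist(E,\sigma(H_{\omega,L}))<e^{L}\}
\le \EE\bigl\{\Tr\chi_{[E-e^{L},E+e^{L}]}(H_{\omega,L})\bigr\}
\le c_u\,\rank u\,\|f\|_{BV}\,e^{L}\,(L+n)^{d(n+1)}.
\]
Inserting $\|f\|_{BV}\le e^{-2L}$ gives $\le c_u\,\rank u\,e^{-L}\,(L+n)^{d(n+1)}$, and then the choice $e^{L}\ge c_u\,\rank u\,L^{p}\,(L+n)^{d(n+1)}$ gives $\le L^{-p}$. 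Note that the two hypotheses are used in a different order than in your version, and the disorder hypothesis is consumed entirely --- it is not the throwaway remark ``$\|f\|_{BV}\le1$'' that your argument relegates it to. Everything downstream in your write-up (Markov inequality, invocation of Theorem~\ref{thm:degenerate}, uniformity in $E$) is fine, but it is applied to the wrong interval; once you restore $\epsilon = e^L$ and reorganise the arithmetic as above, you recover the paper's proof.
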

Here the quantity  $\|f\|_{BV}^{-1}$ is a measure for the disorder:
if it is large the values of the corresponding random variable are spread out over a large interval.
Thus the assumption  $\|f\|_{BV}\le e^{-2L}$  describes a large disorder regime.
\begin{proof}
Since $ \| (H_{\omega,L} -E)^{-1}\| = d(\sigma(H_{\omega,L}), E)^{-1}$,  we have
\begin{multline*}
\PP \{\| (H_{\omega,L} -E)^{-1}\|  >e^{-L}\} 
=
\PP \{d(\sigma(H_{\omega,L}), E) < e^{L}\} 
\\
=
\PP \left \{(E-e^L,E+e^L) \cap \sigma(H_{\omega,L})\neq \emptyset\right\}
\le 
\EE \left \{\Tr \big [\chi_{[E-e^{L},E+e^{L}]}(H_{\omega,L})\big]\right\}
\\
\le 
c_u \, \rank u   \|f\|_{BV} \, e^L\, (L+n)^{d(n+1)} 
\le  e^{-L}
c_u \, \rank u   \, (L+n)^{d(n+1)} 
\end{multline*}
which is bounded by $L^{-p} $ by our assummtion.
\end{proof}
This lemma establishes  an initial scale decay estimate \eqref{eq:uniform-initial-scale-estimate}
for a small neighbourhood of an arbitrary energy. 
However increasing the disorder means changing the model and in particular increasing 
the sup-norm of the single site potential.
Thus one has to check on which parameters of the single site the critical scale $L^*$ depends. 
Indeed, $L^*$ does depend on the size of the support, 
but not on the supremum norm of the single site potential. This fact can be seen from the original proof of \cite{DreifusK-89}. 
A detailed analysis how $L^*$ depende on various model parameters 
has been worked out for continuum random operators in \cite{GerminetK-01a} and applies to discrete operators anagolously.

\subsection*{Acknowledgements}
The author would like to thank A.~Klein for pointing out the reasoning behind Lemma 
\ref{l:large-disorder} and anonymous referees for helpful comments. 
It was a pleasure to have stimulating discussions 
concerning discrete alloy type models 
with A.~Elgart, H.~Kr\"uger, G.~Stolz, and M.~Tautenhahn.


\def\cprime{$'$}\def\polhk#1{\setbox0=\hbox{#1}{\ooalign{\hidewidth
  \lower1.5ex\hbox{`}\hidewidth\crcr\unhbox0}}}

\end{document}